\def\BibTeX{{\rm B\kern-.05em{\sc i\kern-.025em b}\kern-.08em
		T\kern-.1667em\lower.7ex\hbox{E}\kern-.125emX}}
\renewcommand{\P}{\mathbb{P}}
\renewcommand{\d}{\mathbb{d}}
\renewcommand{\ss}{\mathbf{s}}
\begin{document}
	
	% old: Data-driven Uncertainty Partitioning for Chance-Constrained Optimal Control
	
	\title{Uncertainty Partitioning with Probabilistic Feasibility and Performance Guarantees for Chance-Constrained Optimization}
	\author{F. Cordiano, M. Jafarian, \IEEEmembership{Member, IEEE}, and B. De Schutter, \IEEEmembership{Fellow, IEEE}
		\thanks{This project has received funding from the European Research Council (ERC) under the European Union's Horizon 2020 research and innovation programme (Grant agreement No. 101018826 - ERC Advanced Grant CLariNet).}
		\thanks{All authors are affiliated with the Delft Center for Systems and Control, Delft University of Technology, Delft, The Netherlands, email: $\{$f.cordiano, m.jafarian, b.deschutter$\}$@tudelft.nl }
	}

	\maketitle

	\begin{abstract}
		We propose a novel distribution-free scheme to solve optimization problems where the goal is to minimize the expected value of a cost function subject to probabilistic constraints. Unlike standard sampling-based methods, our idea consists of partitioning the uncertainty domain in a user-defined number of sets, enabling more flexibility in the trade-off between conservatism and computational complexity. We provide sufficient conditions to ensure that our approximated problem is feasible for the original stochastic program, in terms of chance constraint satisfaction. In addition, we perform a rigorous performance analysis, by quantifying the distance between the optimal values of the original and the approximated problem. We show that our approach is tractable for optimization problems that include model predictive control of piecewise affine systems, and we demonstrate the benefits of our approach, in terms of the trade-off between conservatism and computational complexity, on a numerical example.
	\end{abstract}

	\begin{IEEEkeywords}
		Chance-constrained programs, robust optimization, performance bounds, model predictive control, hybrid systems
	\end{IEEEkeywords}

	\section{Introduction}
	Optimization problems with uncertainty are of central aspect in several fields, ranging from control \cite{mesbah2016stochastic}, to operation research \cite{dariano2019integrated}, and mathematical finance \cite{bemporad2010scenariobased}. For example, in the field of optimal control, most problems include some level of uncertainty, originating, e.g., from model uncertainty, or exogenous stochastic signals. Under these circumstances, neglecting uncertain components in the underlying optimization problem may significantly degrade the quality of the solution, in terms of performance, safety, and reliability. 
	
	Among the diverse paradigms proposed in the literature, stochastic optimization \cite{ruszczynski2003stochastic} has recently gained a lot of interest, thanks to the possibility of optimizing an expected cost while guaranteeing constraint satisfaction with a desired probability level, leading to solutions that are less conservative compared to purely robust approaches \cite{bertsimas2022robust}. On the other hand, dealing with probabilistic constraints or expected cost functions is difficult: First, it generally leads to non-convex, or even intractable, optimization problems, even for simple cost and constraint functions \cite{luedtke2010integer}; Second, it requires a probabilistic description of the uncertainty; Third, even when the two previous points are satisfied, e.g., by suitable assumptions or approximations, the solved problem should deliver guarantees in terms of performance and probability of constraint violations, while not resulting in excessively conservative solutions.
	
	These aspects have constantly motivated significant research efforts within this field. For example, several approaches assume knowledge of the uncertainty distribution \cite{weissel2009stochastic, bernardini2009scenariobased, cordiano2024scenario}, or of the statistical moments \cite{cordiano2023provablystable}. Although these approaches provide tractable approximations of the original problem and feasibility guarantees, assumptions on the specific type of the distribution are often not met in practice. For this reason, sampling-based approaches gained a lot of success in the recent years. Initially, a sample-average approximation for stochastic optimization was proposed in \cite{kleywegt2002sample}, and feasibility guarantees for chance-constrained problems were given in \cite{luedtke2008sample}. However, a common drawback of this approach is that the computational complexity grows rapidly in the number of samples, and typically many samples are required to provide feasible solutions. In this regard, randomized approaches \cite{campi2008exact}, \cite{alamo2009randomized} replace a chance constraint with a number of independent hard constraints, offering an elegant theory to control the risk of constraint violation while leading to more tractable schemes compared to the sample-average approximation. However, a common feature of randomized approaches is that the sample complexity depends on the number of decision variables; hence, they may result in high computational complexity and conservatism \cite{prandini2012randomized}, especially for high-dimensional or non-convex settings \cite{mohajerinesfahani2015performance}. 

	As most of the papers in the literature focus on feasibility, less attention has been devoted to performance guarantees, i.e., the distance between the optimal value of the original problem and the approximated one. Continuity results for the optimal value of a stochastic optimization problem are proposed in \cite{romisch2007stability}, where the uncertainty only appears in the cost function, and in \cite{romisch2003stability, henrion2004holder}, which also consider chance constraints. These works focus on relating the effect of a perturbation, in cost or constraints, to the optimal value of the problem, which is the case, e.g., in scenario reduction approaches \cite{dupavcova2003scenario}. However, such bounds require constants that might be difficult to compute explicitly, and may be often conservative, especially in the chance-constrained case. 
	In contrast, the work \cite{luedtke2008sample} proposes an approach to practically generate a lower bound to the optimal value of a chance-constrained problem by solving an auxiliary optimization problem based on a sample-average approximation, which, however, may not be computationally efficient to solve. In the context of randomized algorithms, the sampling-and-discarding approach proposed in \cite{campi2011samplinganddiscarding} provides a significant contribution in this direction, but explicit performance bounds are missing. In \cite{mohajerinesfahani2015performance}, explicit performance bounds are given for randomized solutions of chance-constrained programs with deterministic cost functions, but the resulting sample complexity scales exponentially with the dimension of the uncertainty, and explicit computation requires knowledge of the distribution. 
	
	From a practical viewpoint, both feasibility and performance guarantees are relevant in several applications. For example, in stochastic optimal control problems,
	probabilistic constraint satisfaction is crucial for the  safety of the system against uncertainty \cite{mesbah2016stochastic, farina2016stochastic}, while optimizing expected performance has important consequences on the stability of the perturbed system (where stability is indeed intended in a stochastic sense, e.g.\ as in \cite{mcallister2023nonlinear}). However, in stochastic model predictive control, the majority of the results are for linear systems \cite{prandini2012randomized}, \cite{farina2016stochastic}, for which computational tractability of the resulting optimization problem is often possible.
	
	In view of these considerations, a tractable approximation for chance-constrained optimization problems, with expected performance minimization, that is computationally well-tractable and not too conservative, and endowed with non-conservative feasibility and performance guarantees, is missing. To tackle this issue, in this paper we propose a novel distribution-free method to solve such stochastic optimization problem, which lies in between a sampling-based method and a robust approach. We provide a lower bound on the number of samples for probabilistic feasibility guarantees, and we derive informative performance bounds. The novel contributions of this paper are summarized as follows:

	\subsubsection{Trading off complexity and conservatism}
	We partition the uncertainty domain into $K$ sets, where $K$ is user-defined and can be larger than one, as opposed to \cite{margellos2014road, shang2019datadriven}. Then, a set of samples is used in an offline phase to establish a uniform concentration bound between the true probability that the uncertainty belongs to one of these sets, and the empirical one. Notably, the required sample complexity in our framework will depend on $K$, rather than on the complexity of the optimization problem as it is in \cite{luedtke2008sample}, \cite{campi2008exact}. Also, our partition-based approach is applicable to a general chance-constrained optimization problem, in contrast to \cite{sartipizadeh2019voronoi}, which proposes a scenario reduction approach, based on a Voronoi partition, specifically for stochastic reachability problems (i.e., where the probability of staying in a safe set is maximized).
	
	\subsubsection{A priori feasibility} Based on this uncertainty partition, we provide a tractable substitute for the original problem that replaces the chance constraint with a mixed-integer-robust constraint.
	Accounting for the approximation error in the previous point, we establish a sufficient condition that guarantees feasibility \emph{a priori} for the original chance-constrained problem. Since the samples are used only in the partitioning phase, the computational complexity of the resulting problem will not depend on the number of samples (as is the case for \cite{luedtke2008sample}, \cite{campi2008exact}), but rather on $K$. Thus, unlike standard sampling-based methods, our approach offers greater flexibility thanks to the choice of $K$, enabling a trade-off between computational and sample complexity with the conservatism of the performance.
	
	\subsubsection{Explicit performance bounds} We provide an interval in which the optimal value of the original chance-constrained problem is contained. To this end, we introduce an auxiliary optimization problem, which, in contrast to the approach in \cite{luedtke2008sample}, enjoys superior computational tractability. The interval we obtain is probabilistic, and we explicitly characterize the validity of this probabilistic bound in terms of the available data. 
	Compared to \cite{kleywegt2002sample, luedtke2008sample, romisch2007stability}, the bounds we derive are explicit, efficient to compute, and with an acceptable degree of conservatism for a wide class of problems. In addition, we derive an analytic bound on the distance between the optimal values of the original and approximated problem, for a class of constraint functions. Unlike \cite{romisch2003stability, henrion2004holder}, our analytic bound links the optimality gap to purely geometric properties of the cost and constraint functions, thus avoiding constants that might be difficult to compute.
	
	\subsubsection{MPC with logical constraints} We show that the proposed method is applicable to several control problems of interest, such as chance-constrained optimal control of linear systems with logical constraints, e.g., involving discrete decision variables, and chance-constrained optimal control of piecewise-affine (PWA) systems \cite{bemporad1999control}. In particular, the latter has received attention only in \cite{vignali2018datadriven} and \cite{margellos2016constrained} via randomization, and in \cite{xu2019model} via moment-based inequalities. We also propose a dedicated uncertainty partitioning strategy tailored for model predictive control (MPC) applications.
	
	The rest of the paper is organized as follows: Section \ref{sec:prob} introduces the problem formulation. In Section \ref{sec:partition}, we propose our scheme based on uncertainty partitioning. Section \ref{sec:main_section} provides feasibility and performance bounds, while in Section \ref{sec:discussions} we discuss the results from a computational viewpoint and provide details for practical implementation in optimal control problems. Last, in Section \ref{sec:example} we simulate our approach on a model predictive control problem for PWA systems, and Section \ref{sec:conc} concludes the paper.

	\section{Preliminaries and problem formulation}\label{sec:prob}

	\subsection{Mathematical notation}
	With $\Z_{>0}, \R_{>0}$, we denote, respectively, the set of the positive integers and the set of the positive real numbers (and analogous definitions hold for $\Z_{\geq0}, \R_{\geq0}$). The symbol $\mathbf{1}_{[X]}$ denotes the indicator function, which takes the value $1$ if the logical statement $X$ is true and 0 otherwise. The symbol $\lor$ denotes the logical inclusive "or", whereas $\oplus$ denotes the Minkowski sum, i.e.\ $\Xcal\oplus\Ycal=\{z: z=x+y, x\in\Xcal, y
	\in\Ycal\}$. The symbol $\B$ denotes the unit ball with center in the origin.
	A probability space is denoted by $(\Xi,\mathfrak{B}(\Xi), \P)$, where $\Xi$ is a metric space,  $\mathfrak{B}(\Xi)$ is its Borel $\sigma$-algebra, and $\P:\mathfrak{B}(\Xi)\rightarrow[0,1]$ is a probability measure. The product measure $\P^N$ is defined on $\Xi^N=\Xi\times...\times\Xi$, and represents the joint distribution of $(X_1,...,X_N)$, when all $X_i$ are independently and identically distributed according to $\P$.

	\subsection{Problem formulation}
	Consider $\Xcal\subset\R^{n_x}$,  $\Theta\subset\R^{n_\theta}$, and the probability space $(\Theta,\mathfrak{B}(\Theta), \P)$. Given the measurable functions $J: \Xcal\times\Theta\rightarrow\R_{\geq0}$ and $g: \Xcal\times\Theta\rightarrow\R$, let us consider the following chance-constrained optimization problem:
	\begin{align*}
		\text{CP}_\veps: \quad \min_{x\in\Xcal} \ & \E [J(x, \theta)]
		\\ \text{s.t.} \ & \Pr\left(g(x, \theta) \leq 0 \right) \geq 1-\veps, 
	\end{align*}
	with optimal value $J^\star_{\text{CP}_\veps}$ and feasible set $\Fcal_{\text{CP}_\veps}$. Also, $\theta\in\R^{n_\theta}$ is stochastic uncertainty, and $\veps\in[0,1)$ is the risk parameter.
	
	Let $Z\in\Z_{>0}$. In this article, we consider a class of constraint functions described by
	\begin{align}\label{eq:g}
		g(x, \theta) := \min_{h\in\{1,...,Z\}} g_h(x, \theta)
	\end{align}
	where $g_h: \Xcal\times\Theta\rightarrow\R, \forall h\in\{1,..., Z\}$ are measurable functions. Together with \eqref{eq:g}, the feasible set of CP$_\veps$ can be equivalently written as
	\begin{align}\label{eq:cc}
		\Fcal_{\text{CP}_\veps} = \{x\in\Xcal: \Pr\left(\lor_{h=1}^Z (g_h(x, \theta) \leq 0) \right) \geq 1-\veps\},
	\end{align}
	that is, we require that the probability that at least one of the functions $g_h$ is non-positive is at least $1-\veps$. Note that this formulation is similar to the one considered in \cite{mohajerinesfahani2015performance}, and includes in particular chance constraints with integer variables. 
	Also, as we will discuss in Sections \ref{sec:discussions} and \ref{sec:example}, a wide class of optimal control problems, e.g.\ for dynamical systems with logical constraints, can be formulated as CP$_\veps$, for a suitable design of the functions $J$ and $g$.
	For the sake of simplicity of the exposition, we assume the functions $g_h$ to have values in $\R$, which is without loss of generality\footnote{To model multiple constraints, $g_h$ can be designed as $g_h(x,\theta)=\max_{k\in\{1,...,n_\text{constr}\}} g_{h,k}(x,\theta)$, which again has values in $\R$.}. 
	
	As previously mentioned, solving CP$_{\veps}$ can be challenging even if $\P$ is known, since, in general, it would require to evaluate multidimensional integrals over the distribution describing $\theta$. In addition, note that the constraint function \eqref{eq:g} is generally nonlinear even if $g_h$ are linear, $h\in\{1,...,Z\}$, and the logical ``or'' can be formulate, e.g., by means of integer variables \cite{bemporad1999control}. For these two reasons, problems such as CP$_{\veps}$ can be, in general, intractable.

	\section{Uncertainty partitioning}\label{sec:partition}
	The underlying idea to solve CP$_\veps$ relies on partitioning the uncertainty domain $\Theta$. Our approach does not require assumptions on the distribution that induces the probability measure $\P$, nor its explicit knowledge. Instead, we require that a number $N\in\Z_{>0}$ of uncertainty realizations is available, with the intuition that a sufficiently large number of samples provides a good description of $\P$.

	\subsection{Partitioning scheme}
	Throughout the paper, we consider the following assumptions:
	\begin{assumption}\label{ass:iid}
		A dataset of $N$ independent and identically distributed realizations $\{\thetai\}_{i=1}^N$, drawn from $\P$, is available.
	\end{assumption}
	\begin{assumption}\label{ass:partition}
		There exists a set $\Dcal\subset\R^{n_\theta}$ such that: 1) $\Theta\subseteq\Dcal$; 2) $\Dcal$ can be partitioned into $K$ non-empty sets, i.e.: $\exists \Dcalj\subseteq\Dcal, \forall j\in\{1,...,K\}$, with $K\in\Z_{>0}$, such that: $\Dcalj\neq\emptyset, \forall j\in\{1,...,K\},$ $\Pr(\theta\in\Dcal^{(i)}\cap\Dcalj) = 0$, for all $i,j \in \{1,...,K\}, i\neq j$, and $\bigcup_{j=1}^{K} \Dcalj = \Dcal$. The sets $\Dcalj$ are assumed to be known, $\forall j\in\{1,...,K\}$.
	\end{assumption}
	
	We observe two aspects in Assumption \ref{ass:partition}. First, we introduce a set $\Dcal$ that contains the uncertainty domain $\Theta$ because, in some cases, we may not know the actual uncertainty domain exactly. However, depending on the application, we may be able to compute an over-approximation. For example, this can be the case if the uncertainty has a physical meaning, e.g., in temperature control \cite{shang2019datadriven}. Note that we have
	$\Pr(\theta\in\Theta)=1$ and $ \Pr(\theta\in\Dcal\setminus\Theta)=0$;
	hence, considering the superset $\Dcal$ instead of $\Theta$ will not affect the analysis (by requiring the functions $J, g_h, h\in\{1,...,Z\}$, to be defined on the superset $\Dcal$).
	In addition, while $\Theta$ can be any set, we can design $\Dcal$ with some convenient shape for computational reasons, as we will see later. Second, note that the condition $\Pr(\theta\in\Dcal^{(i)}\cap\Dcalj) = 0$ requires the sets $\Dcalj$ to be pairwise disjoint, up to 0-probability events. Thus, if $\P$ admits a density, which is the case of continuous distributions, any partition whose sets have pairwise disjoint interiors satisfies Assumption \ref{ass:partition}.
	In Section \ref{sec:discussions}, we will discuss three options to provide such a partition.
	
	We denote the sets of the indices corresponding to the realizations that belong to a given set by
	\begin{align*}
		\Ccalj := \{i\in\{1,...,N\}: \thetai\in\Dcalj \}, \ \forall j\in\{1,...,K\},
	\end{align*}
	where, in view of Assumption \ref{ass:partition}, they are pairwise disjoint with probability 1, and $\cup_{j=1}^K \Ccalj =\{1,...,N\}$. 
	For each set $\Dcalj$, we select a representative element $\hthetaj, \forall j\in\{1,...,K\}$. Although the choice is free, a possible one can be the average of the realizations that belong to $\Dcalj$, i.e.
	\begin{align}\label{eq:hthetaj}
		\hthetaj := \frac{1}{|\Ccalj|} \sum_{i\in\Ccalj} \thetai, \ \forall j\in\{1,...,K\},
	\end{align}
	for which we observe that $\hthetaj \in \Dcalj$ if $\Dcalj$ is convex.
	We define the probabilities $\hpj$ as the empirical probability mass associated to region $\Dcalj$, i.e.
	\begin{align}
		\begin{split}\label{eq:hpj}
			\hpj :&= \frac{1}{N} \sum_{i=1}^N  \mathbf{1}_{i\in\Ccalj}, \ \forall j\in\{1,...,K\},
		\end{split}
	\end{align}
	for which we have $\sum_{j=1}^{K} \hpj = 1$ with probability 1, in view of Assumption \ref{ass:partition}.
	Since the realizations $\{\thetai\}_{i=1}^N$ are drawn from $\P$ independently, the probability $\hpj$ is the approximation, obtained via counting, of the actual probability that $\theta$ belongs to $\Dcalj, \forall j\in\{1,...,K\}$, i.e.,
	\begin{align}\label{eq:prob_approx}
		\hpj \approx \Pr(\theta\in\Dcalj), \quad \forall j\in\{1,...,K\},
	\end{align}
	where the accuracy of the approximation increases with $N$.
	
	In the proposed approach we approximate the objective function in CP$_{\veps}$ by only employing the probabilities $\hpj$ and the representative elements $\hthetaj$. Thus, we optimize 
	\begin{align}\label{eq:cost_discrete}
		\sum_{j=1}^{K} \hpj J(x, \hthetaj),
	\end{align}
	which can be seen as the expectation of the cost function $J$, taken over the discrete distribution with domain on the set $\{\hthetaj\}_{j=1}^{K}$ and occurrence probabilities $\{\hpj\}_{j=1}^{K}$. 
	Concerning the chance constraint, we approximate \eqref{eq:cc} by
	\begin{align}\label{eq:cc_discr_approx}
		\sum_{j=1}^K \hpj \mathbf{1}_{[\lor_{h=1}^Z (g_h(x,\theta)\leq 0, \forall\theta\in\Dcalj)]} \geq 1-\veps,
	\end{align}
	where we optimally select for which of the sets in $\{\Dcalj\}_{j=1}^K$ the constraint is robustly satisfied at least by a function\footnote{If $g_h$ represents multidimensional constraints, i.e.\ $g_h:\Xcal\times\Dcal\rightarrow\R^{n_\mathrm{constr}}$, the robustification is performed equivalently element-wise, and the argument of the indicator function is: $\lor_{h=1}^Z \left(g_{h,i}(x,\theta)\leq 0, \forall\theta\in\Dcalj, \forall{i\in\{1,...,n_\mathrm{constr}\} }\right)$.} $g_h, h\in\{1,...,Z\}$,  in a way that the empirical probability mass contained in such sets is at least $1-\veps$.
	Then, by putting together \eqref{eq:cost_discrete}-\eqref{eq:cc_discr_approx}, and in view of the structure of $g$ in \eqref{eq:cc}, the Partition-based Problem reads as:
	\begin{align*}
		\text{PP}_\veps: \quad \min_{x\in\Xcal} \ & \sum_{j=1}^{K} \hpj J(x, \hthetaj)
		\\ \text{s.t.} \ & \sum_{j=1}^K \hpj \mathbf{1}_{[\lor_{h=1}^Z (g_h(x,\theta)\leq 0, \forall\theta\in\Dcalj)]} \geq 1-\veps, 
	\end{align*}
	with optimal value denoted by $J^\star_{\text{PP}_{\veps}}$ and feasible set
	\begin{align}
		\Fcal_{\text{PP}_\veps} := \{&x\in\Xcal:  \nonumber
		\\& \sum_{j=1}^K \hpj \mathbf{1}_{[\lor_{h=1}^Z (g_h(x,\theta)\leq 0, \forall\theta\in\Dcalj)]} \geq 1-\veps\} \label{eq:feas_set_pp}
	\end{align} 
	Given the previous considerations, we observe that PP$_{\veps}$ provides a discrete approximation, in terms of cost function and chance constraint, of the original CP$_{\veps}$, regardless of the nature of $\P$. Thus, in contrast to CP$_{\veps}$, PP$_{\veps}$ can be solved by a numerical solver, provided that the robust constraint in PP$_\veps$ can be efficiently computed (see Section \ref{sec:discussions} for more details). 
	In particular, the computational complexity of PP$_{\veps}$ is determined also by $K$, which is user-defined. Intuitively, a large $K$ results in a very accurate discrete approximation of the original problem, making the solution of PP$_{\veps}$ very close to the original CP$_{\veps}$, at the expenses of increased computational complexity. 
	
	Although its intuitive interpretation, there is no direct link between the optimal solution of PP$_{\veps}$ and the optimal solution of CP$_{\veps}$. This is due to the partitioning approximation, and due to the fact that $\{\hpj\}_{j=1}^K$ in \eqref{eq:prob_approx} are, in general, only an approximation of $\{\Pr(\theta\in\Dcalj)\}_{j=1}^K$. Thus, we provide next conditions to ensure that any solution resulting from PP$_{\veps}$ is feasible for the original CP$_{\veps}$, and we establish performance bounds between the optimal values $J^\star_{\text{CP}_{\veps}}$ and $J^\star_{\text{PP}_{\veps}}$.

	\subsection{Probabilistic error}
	As previously stated, $\P$ does not need to be known. Thus, the feasibility analysis is purely data-driven, and it only relies on the accuracy of the estimated probabilities $\hpj, \forall j\in\{1,...,K\}$, which become more and more accurate as more samples are available. Let the following assumption hold:
	\begin{assumption}\label{ass:uniform:M}
		Let $\delta\in(0,\veps]$, $\beta\in(0,1]$. We assume that $N, K, \delta, \beta$ satisfy:
		\begin{align*}
			N \geq \frac{K \log 2 + \log\frac{1}{\beta}}{2\delta^2}.
		\end{align*}
	\end{assumption}
	
	Intuitively, the parameter $\delta$ represents the level of accuracy that we would like to achieve in the estimates for the probabilities $\Pr(\theta\in\Dcalj), \forall j\in\{1,...,K\}$, which, as for $K$, is user-defined, provided that the given condition for $N$ is satisfied. The parameter $\beta$, instead, is a confidence parameter, which encodes the fact that the probabilities $\{\hpj\}_{j=1}^K$ are estimated empirically, thus their computation is affected by the variability of the available data. Note that smaller values for $\delta$ and $\beta$ require a larger $N$.
	
	The condition on $N$ in Assumption \ref{ass:uniform:M} resembles other common assumptions on the sample complexity, especially in the field of sampling-based methods \cite{luedtke2008sample}, or randomized algorithms for non-convex programs \cite{alamo2009randomized, tempo2013randomized}. However, an important difference is that our requirement for $N$ is linear in $K$, which is a user-defined parameter. On the other side, in \cite{alamo2009randomized}, \cite{luedtke2008sample, campi2011samplinganddiscarding}, the sample complexity is affected by the number of decision variables in the optimization problem, which can be high especially in non-convex settings \cite{mohajerinesfahani2015performance}.

	Under Assumption \ref{ass:uniform:M}, we can state the following lemma, which indeed provides a concentration bound for the approximation \eqref{eq:prob_approx}.
	\begin{lemma}\label{lemma:uniform}
		Let $\delta\in(0,\veps)$, $\beta\in(0,1]$. Under Assumptions \ref{ass:iid}-\ref{ass:uniform:M}, it holds that
		\begin{align*}
			\P^N\left( \max_{\Jcal\subseteq\{1,...,K\}}
			\left| \sum_{j\in\Jcal} \left(\Pr(\theta\in\Dcalj) - \hpj  \right) \right|
			\leq\delta\right) \geq 1-\beta .
		\end{align*}
	\end{lemma}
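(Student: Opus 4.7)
The plan is to recognize the statement as a uniform deviation bound for a finite family of Bernoulli empirical means, and to prove it by combining Hoeffding's inequality with a union bound over subsets of $\{1,\dots,K\}$. The key observation, enabled by the disjointness in Assumption~\ref{ass:partition}, is that for any fixed index set $\Jcal\subseteq\{1,\dots,K\}$,
\begin{align*}
\sum_{j\in\Jcal}\Pr(\theta\in\Dcalj) \;=\; \Pr\!\bigl(\theta\in\textstyle\bigcup_{j\in\Jcal}\Dcalj\bigr),\qquad
\sum_{j\in\Jcal}\hpj \;=\; \frac1N\sum_{i=1}^N \mathbf{1}_{[\thetai\in\bigcup_{j\in\Jcal}\Dcalj]},
\end{align*}
holding $\P^N$-almost surely. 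Hence the quantity inside the outer $\max$ of the lemma is the absolute deviation of an empirical mean of i.i.d.\ Bernoulli random variables $Y_i^{\Jcal}:=\mathbf{1}_{[\thetai\in\bigcup_{j\in\Jcal}\Dcalj]}$ from its expectation.

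Having reduced the problem to Bernoulli concentration, I would next apply Hoeffding's inequality to obtain, for each fixed $\Jcal$,
\begin{align*}
\P^N\!\Bigl(\bigl|\textstyle\sum_{j\in\Jcal}(\Pr(\theta\in\Dcalj)-\hpj)\bigr|>\delta\Bigr)\;\leq\;2\exp(-2N\delta^2).
\end{align*}
A naive union bound over all $2^K$ subsets would yield a prefactor $2^{K+1}$, which is slightly too large to match the sample-complexity expression in Assumption~\ref{ass:uniform:M}. The pivotal trick to save a factor of two is to pair each $\Jcal$ with its complement $\Jcal^{\mathrm c}=\{1,\dots,K\}\setminus\Jcal$: since $\sum_{j=1}^K\Pr(\theta\in\Dcalj)=\sum_{j=1}^K\hpj=1$ (the latter $\P^N$-a.s.\ by Assumption~\ref{ass:partition}), the deviation for $\Jcal^{\mathrm c}$ is the negative of that for $\Jcal$, so the events inside the lemma's $\max$ are identical for $\Jcal$ and $\Jcal^{\mathrm c}$. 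One therefore only needs to union-bound over $2^{K-1}$ distinct subsets, giving a total failure probability bounded by $2^{K-1}\cdot 2\exp(-2N\delta^2)=2^{K}\exp(-2N\delta^2)$.

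Finally, to close the argument, I would impose that this bound be at most $\beta$, i.e.\ $2^K\exp(-2N\delta^2)\leq\beta$, which after taking logarithms is exactly the inequality $N\geq(K\log 2+\log(1/\beta))/(2\delta^2)$ postulated in Assumption~\ref{ass:uniform:M}. Taking complements yields the desired lower bound $1-\beta$ on the good event. The only subtle point, which I anticipate as the main obstacle, is the constant in the union bound: without the complement-pairing observation the exponent of $2$ is off by one and the stated sample complexity is not quite attained; everything else is a direct application of Hoeffding and Assumption~\ref{ass:iid}.
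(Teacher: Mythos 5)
Your proof is correct, and in substance it coincides with the paper's: the paper proves Lemma \ref{lemma:uniform} by citing the Bretagnolle--Huber--Carol inequality for the multinomial vector of cell counts $Y^{(j)}=\sum_{i=1}^N\mathbf{1}_{[\thetai\in\Dcalj]}$ and then converting the resulting $\ell_1$ (total-variation) deviation bound into the maximum over subsets via Scheff\'e's identity, whereas you derive exactly that concentration bound from first principles: Hoeffding's inequality for each fixed $\Jcal$ (using that, up to $\P$-null overlaps, $\sum_{j\in\Jcal}\hpj$ is the empirical mean of the i.i.d.\ Bernoulli variables $\mathbf{1}_{[\thetai\in\cup_{j\in\Jcal}\Dcalj]}$), followed by a union bound in which pairing each $\Jcal$ with its complement (legitimate because $\sum_{j=1}^K\hpj=\sum_{j=1}^K\Pr(\theta\in\Dcalj)=1$ holds $\P^N$-a.s.\ under Assumption \ref{ass:partition}) reduces the prefactor from $2^{K+1}$ to $2^{K}$, so that $2^{K}e^{-2N\delta^2}\leq\beta$ is precisely the sample-complexity condition of Assumption \ref{ass:uniform:M}. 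This is in fact the standard proof of the Bretagnolle--Huber--Carol inequality, so the two routes yield identical constants; your version is self-contained and avoids Scheff\'e's identity by working with the subset maxima directly, while the paper's is shorter because it outsources the combinatorial step to the cited inequality, and your explicit handling of the factor of two (equivalently, one could union-bound the $2^K$ one-sided events) makes transparent why the $K\log 2$ term in Assumption \ref{ass:uniform:M} suffices.
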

	\begin{proof}
		See Appendix A.
	\end{proof}
	
	We observe that Lemma \ref{lemma:uniform} provides a uniform bound for the estimates $\hpj, \forall j\in\{1,...,K\}$. Indeed, let $\Bar{\Dcal}$ be a set obtained by taking the union of some of the sets in $\{\Dcalj\}_{j=1}^{K}$, that is: $\Bar{\Dcal} = \cup_{j\in\Jcal}\Dcalj$, for some $\Jcal\subseteq\{1,...,K\}, \Jcal\neq\emptyset$. Then, Lemma \ref{lemma:uniform} ensures that the distance between the empirical and the true probability that $\theta$ belongs to $\Bar{\Dcal}$ is at most $\delta$. This bound is indeed uniform, since it holds with confidence at least $1-\beta$ for any possible choice for $\Bar{\Dcal}$ , i.e., for all the possible combinations to construct a set $\Jcal$ as a subset of $\{1,...,K\}$.

	\section{Feasibility and performance analysis}\label{sec:main_section}
	Building upon the previous lemma, we derive results about the feasibility of PP$_\veps$, as well as its optimality. More specifically, we seek sufficient conditions such that an appropriate constraint modification in PP$_\veps$ ensures that $\Fcal_{\text{PP}_{\veps}} \ \subseteq \ \Fcal_{\text{CP}_\veps}$. In addition, we are interested in finding an interval (easy to compute and not excessively conservative) that contains $J^\star_{\text{CP}_\veps}$ with high confidence.

	\subsection{Probabilistic feasibility}\label{subsec:feas}
	
	Given the result in Lemma \ref{lemma:uniform}, we can provide a feasibility result for PP$_\veps$:
	\begin{theorem}\label{th:feas_prob}
		Let $\delta\in(0,\veps]$, $\beta\in(0,1]$. Under Assumptions \ref{ass:iid}-\ref{ass:uniform:M}, it holds that
		\begin{align*}
			\P^N \left(\Fcal_{\text{PP}_{\veps-\delta}} \ \subseteq \ \Fcal_{\text{CP}_\veps}\right) \geq 1-\beta,
			%\ \subseteq \ \Fcal_{\text{RP}_{\veps+\delta}}
		\end{align*}
		i.e., any feasible solution for PP$_{\veps-\delta}$ is feasible for CP$_\veps$ with probability at least $1-\beta$.
	\end{theorem}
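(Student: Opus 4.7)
\medskip

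The plan is to proceed by a direct set-inclusion argument, exploiting the uniform concentration bound of Lemma~\ref{lemma:uniform} to transfer the empirical robust feasibility of a candidate $x\in\Fcal_{\text{PP}_{\veps-\delta}}$ into a true chance-constraint statement.

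First I would condition on the high-probability event of Lemma~\ref{lemma:uniform}, i.e., the event
\[
E := \Bigl\{\max_{\Jcal\subseteq\{1,\dots,K\}} \bigl| \sum_{j\in\Jcal} (\Pr(\theta\in\Dcalj) - \hpj) \bigr| \leq \delta \Bigr\},
\]
which by Lemma~\ref{lemma:uniform} has $\P^N$-measure at least $1-\beta$. All subsequent steps will be pointwise on this event. The key observation is that the uniformity over \emph{all} subsets $\Jcal$ of $\{1,\dots,K\}$ is what makes the argument valid, because the subset of ``active'' sets depends on $x$, and $x$ itself is data-dependent.

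Next, fix an arbitrary $x\in\Fcal_{\text{PP}_{\veps-\delta}}$ and define the (data- and $x$-dependent) index set
\[
\Jcal(x) := \bigl\{ j\in\{1,\dots,K\} : \lor_{h=1}^Z \bigl(g_h(x,\theta)\leq 0, \ \forall\theta\in\Dcalj\bigr) \bigr\}.
\]
By definition of $\Fcal_{\text{PP}_{\veps-\delta}}$ in \eqref{eq:feas_set_pp}, we have $\sum_{j\in\Jcal(x)} \hpj \geq 1-(\veps-\delta)$. Then for every $j\in\Jcal(x)$ there exists an index $h$ such that $g_h(x,\theta)\leq 0$ for all $\theta\in\Dcalj$, hence $\min_h g_h(x,\theta)\leq 0$ on $\Dcalj$, so the event $\{\lor_{h=1}^Z (g_h(x,\theta)\leq 0)\}$ contains $\{\theta\in\bigcup_{j\in\Jcal(x)} \Dcalj\}$. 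Using the pairwise-disjointness (up to zero-probability overlaps) from Assumption~\ref{ass:partition}, this gives
\[
\Pr\bigl(\lor_{h=1}^Z (g_h(x,\theta)\leq 0)\bigr) \;\geq\; \sum_{j\in\Jcal(x)} \Pr(\theta\in\Dcalj).
\]

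The final step is to apply Lemma~\ref{lemma:uniform} with the particular subset $\Jcal(x)$: on the event $E$, $\sum_{j\in\Jcal(x)} \Pr(\theta\in\Dcalj) \geq \sum_{j\in\Jcal(x)} \hpj - \delta \geq 1-(\veps-\delta)-\delta = 1-\veps$. Combining the two chains of inequalities yields $x\in\Fcal_{\text{CP}_\veps}$. Since $x\in\Fcal_{\text{PP}_{\veps-\delta}}$ was arbitrary, this proves $\Fcal_{\text{PP}_{\veps-\delta}}\subseteq \Fcal_{\text{CP}_\veps}$ on $E$, and the probabilistic claim follows. The main obstacle — which is already neutralised by Lemma~\ref{lemma:uniform} — is precisely that $\Jcal(x)$ is random and depends on $x$, forcing us to need a uniform deviation inequality over all $2^K$ candidate subsets rather than a pointwise one for any fixed $\Jcal$.
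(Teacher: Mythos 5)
Your proposal is correct and follows essentially the same route as the paper's proof: both condition on the uniform concentration event of Lemma \ref{lemma:uniform}, lower-bound $\Pr(g(x,\theta)\leq 0)$ by the total true probability of the sets $\Dcalj$ on which the constraint is robustly satisfied, and use the uniformity over all subsets $\Jcal\subseteq\{1,\dots,K\}$ to handle the data- and $x$-dependent active set with an error of at most $\delta$. Your explicit index set $\Jcal(x)$ and the additivity over (almost) disjoint sets is just a rephrasing of the paper's indicator-sum and law-of-total-probability argument.
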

	\begin{proof}
		In view of Assumption \ref{ass:partition}, and leveraging the law of total probability, we can write, for all $x\in\Xcal$:
		\begin{align}
			&\Pr(g(x,\theta)\leq0)  \nonumber
			\\& = \sum_{j=1}^{K} \Pr(g(x,\theta)\leq0| \theta\in\Dcalj)\Pr(\theta\in\Dcalj)  \nonumber
			\\& \geq \sum_{j=1}^{K} \mathbf{1}_{[\lor_{h=1}^Z (g_h(x,\theta)\leq 0, \forall\theta\in\Dcalj)]} \Pr(\theta\in\Dcalj)  \label{proof:feas_prob:1}
		\end{align}
		which holds since, when indicator function takes value 1 (for some $j\in\{1,...,K\}$), there exists $\Bar{h}\in \{1,...,Z\}$ such that 
		$$g_{\Bar{h}}(x,\theta)\leq 0, \forall\theta\in\Dcalj,$$
		which implies that 
		$$ g(x,\theta) =  \min_{h\in\{1,...,Z\}} g_h(x,\theta) \leq g_{\Bar{h}}(x,\theta)\leq 0, \forall\theta\in\Dcalj. $$
		Thus, whenever, in \eqref{proof:feas_prob:1}, the indicator function takes value 1 for some $j\in\{1,...,K\}$, we have
		$\Pr(g(x,\theta)\leq0| \theta\in\Dcalj)=1$ (the case in which the indicator takes value 0 is trivial).
		
		Now, we will prove the claim leveraging Lemma \ref{lemma:uniform}. Under Assumptions \ref{ass:iid} and \ref{ass:uniform:M}, Lemma \ref{lemma:uniform} guarantees that the following hold uniformly for all $x\in\Xcal$, with probability at least $1-\beta$:
		\begin{align}
				\delta & \geq \max_{\Jcal\subseteq\{1,...,K\}}
				\left| \sum_{j\in\Jcal} \left(\hpj - \Pr(\theta\in\Dcalj)  \right) \right|  \nonumber
				\\& \geq \left| \sum_{j=1}^{K} \mathbf{1}_{[\lor_{h=1}^Z (g_h(x,\theta)\leq 0, \forall\theta\in\Dcalj)]} (\hpj - \Pr(\theta\in\Dcalj)) \right| \nonumber
				\\& \geq \sum_{j=1}^{K} \mathbf{1}_{[\lor_{h=1}^Z (g_h(x,\theta)\leq 0, \forall\theta\in\Dcalj)]} ( \hpj-\Pr(\theta\in\Dcalj) ) \label{proof:feas_prob:4}
		\end{align}
		where the first inequality is from Lemma \ref{lemma:uniform}, whereas the second and the third one are always true.
		Then, any $x\in\Fcal_{\text{PP}_{\veps-\delta}}$ satisfies, with probability at least $1-\beta$
		\begin{align*}
			\delta & \geq \sum_{j=1}^{K} \mathbf{1}_{[\lor_{h=1}^Z (g_h(x,\theta)\leq 0, \forall\theta\in\Dcalj)]} \left(\hpj-\Pr(\theta\in\Dcalj)\right) 
			\\& \geq 1-(\veps-\delta) - \sum_{j=1}^{K} \mathbf{1}_{[\lor_{h=1}^Z (g_h(x,\theta)\leq 0, \forall\theta\in\Dcalj)]}\Pr(\theta\in\Dcalj)
			\\& \geq 1-(\veps-\delta) - \Pr(g(x,\theta)\leq0),
		\end{align*}
		where the first inequality is from from \eqref{proof:feas_prob:4}, the second holds since we consider $x\in\Fcal_{\text{PP}_{\veps-\delta}}$, and the third comes from \eqref{proof:feas_prob:1}.
		Hence, in view of Lemma \ref{lemma:uniform}, the previous implications translate to
		\begin{align*}
			& \P^N \left( \Pr(g(x,\theta)\leq0) \geq 1-\veps, \ \forall x\in\Fcal_{\text{PP}_{\veps-\delta}} \right) \\& \geq \P^N\left( \max_{\Jcal\subseteq\{1,...,K\}}
			\left| \sum_{j\in\Jcal} \left(\Pr(\theta\in\Dcalj) - \hpj  \right) \right|
			\leq\delta\right)
			\\& \geq 1-\beta,
		\end{align*}
		which proves the claim.
	\end{proof}
	
	In view of Theorem \ref{th:feas_prob}, any choice of $N, K, \delta$ ensures that PP$_{\veps-\delta}$ is also feasible for CP$_{\veps}$ with high probability, provided that $N$ satisfies Assumptions \ref{ass:partition} and \ref{ass:uniform:M}. Intuitively, with a reduction of the risk parameter $\veps$ of an amount $\delta$, we can account for the probabilistic error introduced by the counting procedure.
	In addition, we remark that despite the bound in Lemma \ref{lemma:uniform} is \emph{a posteriori}, i.e., for a given partition, we evaluate the accuracy of the estimate for $\{\hpj\}_{j=1}^K$, the feasibility result in Theorem \ref{th:feas_prob} is \emph{a priori}, i.e., no validation set is required to evaluate the risk of constraint of the solution of PP$_{\veps-\delta}$.

	\subsection{Probabilistic performance bounds}\label{subsec:performance}
	In several applications, e.g.\ when the objective function represents a cost or a consumption, optimal performances are also of interest in addition to feasibility guarantees. 
	Thus, the goal of this section is to provide an interval $[\underline{c}, \Bar{c}]$ such that:
	\begin{align}\label{eq:bound_Jccp}
		J^\star_{\text{CP}_\veps} \in [\underline{c}, \Bar{c}],
	\end{align}
	with $\underline{c}, \Bar{c}, \in\R_{\geq0}$ efficiently computable and at the same time not too conservative. 

	Due to the robustification of both the constraint and the risk parameter, PP$_{\veps-\delta}$ provides feasible solutions for CP$_\varepsilon$ in view of Theorem \ref{th:feas_prob}. For this reason, it is natural to think that $\Bar{c}$ will be related to $J^\star_{\text{PP}_{\veps-\delta}}$. Following this intuition, we introduce an auxiliary optimization problem, where, in contrast to PP$_\veps$, the constraint and the risk parameter are relaxed. By recalling the definition of $g$ in \eqref{eq:cc}, we introduce the following Relaxed Problem (RP):
	\begin{align*}
		\text{RP}_\veps: \ \min_{x\in\Xcal} \ & \sum_{j=1}^{K} \hpj J(x, \hthetaj)
		\\ \text{s.t.} \ & \sum_{j=1}^K \hpj \mathbf{1}_{[\lor_{h=1}^Z (g_h(x, \hthetaj) \leq \gammaj_h(x,\hthetaj))]} \geq 1-\veps 
	\end{align*}
	with optimal value $J^\star_{\text{RP}_{\veps}}$ and feasible set
	\begin{align}
		\Fcal_{\text{RP}_\veps} = \{&x\in\Xcal: \label{eq:feas_set_rp}
		\\& \sum_{j=1}^K \hpj \mathbf{1}_{[\lor_{h=1}^Z (g_h(x, \hthetaj) \leq \gammaj_h(x,\hthetaj))]} \geq 1-\veps\}, \nonumber
	\end{align}
	where the functions $\gammaj_h:\Xcal\times\Dcal \rightarrow \R_{\geq0}$ satisfy, $\forall j\in\{1,...,K\}, \forall h\in\{1,...,Z\}$:
	\begin{align}\label{eq:gamma}
		\begin{split}
			&g_h(x,\hthetaj) - g_h(x, \theta) \leq \gammaj_h(x,\hthetaj),
			\\& \forall x\in\Xcal, \forall\theta\in\Dcalj.
		\end{split}
	\end{align}
	In contrast to PP$_\veps$, where the constraint function is tightened, in RP$_\veps$ each function $g_h$ is relaxed by an appropriate amount.
	Note that, given the above condition, $\gammaj_h$ is a non-negative function. Also, note that, according to \eqref{eq:gamma}, $\gammaj_h$ provides a bound on the growth of the functions $g_h$, with $h\in\{1,...,Z\}$, and, for example, it is satisfied for functions $g_h$ that are locally Lipschitz continuous in the second argument, since this would imply, for some function $L_h:\Xcal\rightarrow\R_{\geq 0}$
	\begin{align*}
		& g_h(x,\hthetaj) - g_h(x, \theta) 
		\\& \leq L_h(x) \max_{\theta\in\Dcalj} \|\hthetaj-\theta\| =: \gammaj_h(x,\hthetaj).
	\end{align*}

	From Theorem \ref{th:feas_prob}, we know that $\Fcal_{\text{PP}_{\veps-\delta}}\subseteq\Fcal_{\text{CP}_{\veps}}$ with high confidence. Interestingly, an analogous inclusion relation holds for the relaxed problem as well:
	\begin{lemma}\label{lemma:inclusion}
		Let $\delta\in(0,\veps]$, $\beta\in(0,1]$. Under Assumptions \ref{ass:iid}-\ref{ass:uniform:M}, it holds that
		\begin{align}\label{lemma:inclusion:claim}
			\P^N \left(\Fcal_{\text{PP}_{\veps-\delta}} \ \subseteq \ \Fcal_{\text{CP}_\veps} \ \subseteq \ \Fcal_{\text{RP}_{\veps+\delta}} \right) \geq 1-\beta.
			%\ \subseteq \ \Fcal_{\text{RP}_{\veps+\delta}}
		\end{align}
	\end{lemma}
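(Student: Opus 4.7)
The plan is to get the first inclusion $\Fcal_{\text{PP}_{\veps-\delta}} \subseteq \Fcal_{\text{CP}_\veps}$ for free from Theorem \ref{th:feas_prob}, and then to establish the new inclusion $\Fcal_{\text{CP}_\veps} \subseteq \Fcal_{\text{RP}_{\veps+\delta}}$ on the \emph{same} high-probability event supplied by Lemma \ref{lemma:uniform}. The point of using a single event is that it forces the joint probability, rather than a naive union bound that would cost an extra $\beta$.

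For the new inclusion, fix an arbitrary $x\in\Fcal_{\text{CP}_\veps}$ and introduce the data-independent index set
\begin{align*}
	\Jcal^\star(x) := \{j\in\{1,\dots,K\}:\ \exists\,\theta\in\Dcalj \text{ with } g(x,\theta)\leq 0\}.
\end{align*}
The first step is to show that for every $j\in\Jcal^\star(x)$ the RP-indicator fires. Indeed, pick $\theta\in\Dcalj$ and $\bar h\in\{1,\dots,Z\}$ with $g_{\bar h}(x,\theta)\leq 0$; then the growth bound \eqref{eq:gamma} gives $g_{\bar h}(x,\hthetaj)\leq g_{\bar h}(x,\theta)+\gammaj_{\bar h}(x,\hthetaj)\leq \gammaj_{\bar h}(x,\hthetaj)$, so the disjunction in the RP constraint is satisfied at $j$. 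Consequently,
\begin{align*}
	\sum_{j=1}^{K} \hpj\, \mathbf{1}_{[\lor_{h=1}^{Z} (g_h(x,\hthetaj)\leq \gammaj_h(x,\hthetaj))]} \;\geq\; \sum_{j\in\Jcal^\star(x)}\hpj.
\end{align*}

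The second step relates the true probability mass on $\Jcal^\star(x)$ to feasibility of $x$ for CP$_\veps$: since $\Pr(g(x,\theta)\leq 0\mid \theta\in\Dcalj)=0$ whenever $j\notin\Jcal^\star(x)$, the law of total probability (exactly the identity used in \eqref{proof:feas_prob:1}, run in the opposite direction) gives $\Pr(g(x,\theta)\leq 0)\leq \sum_{j\in\Jcal^\star(x)}\Pr(\theta\in\Dcalj)$. Combining with the hypothesis $x\in\Fcal_{\text{CP}_\veps}$ yields $\sum_{j\in\Jcal^\star(x)}\Pr(\theta\in\Dcalj)\geq 1-\veps$.

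The third step invokes Lemma \ref{lemma:uniform} with the specific choice $\Jcal=\Jcal^\star(x)$: on the event of $\P^N$-probability at least $1-\beta$ on which the uniform concentration holds, $\sum_{j\in\Jcal^\star(x)}\hpj\geq \sum_{j\in\Jcal^\star(x)}\Pr(\theta\in\Dcalj)-\delta\geq 1-\veps-\delta$, which together with the first step shows $x\in\Fcal_{\text{RP}_{\veps+\delta}}$. Crucially, the event used here is the very same uniform event from Lemma \ref{lemma:uniform} that underlies the proof of Theorem \ref{th:feas_prob}, so both inclusions in \eqref{lemma:inclusion:claim} hold simultaneously on a set of probability at least $1-\beta$. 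The only substantive point to be careful about is precisely this sharing of the good event; everything else is a rearrangement of the arguments already present in the proof of Theorem \ref{th:feas_prob}, merely exploiting \eqref{eq:gamma} in the relaxation direction instead of tightening.
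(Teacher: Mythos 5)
Your proof is correct and follows essentially the same route as the paper's: the key bound (RP-indicator equals one on every cell $\Dcalj$ containing a point with $g(x,\theta)\leq 0$, via \eqref{eq:gamma}) is just the contrapositive of the paper's inequality bounding $\Pr(\lor_h g_h(x,\theta)\leq 0\mid\theta\in\Dcalj)$ by the relaxed indicator, and both arguments then convert true mass to empirical mass using the uniform event of Lemma \ref{lemma:uniform}, shared with the proof of Theorem \ref{th:feas_prob} so that the two inclusions hold jointly with probability at least $1-\beta$. The only cosmetic difference is that you argue in the scalar-valued case (which the paper declares without loss of generality), whereas the appendix proof spells out the componentwise version of \eqref{eq:gamma}.
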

	\begin{proof}
		See Appendix A.
	\end{proof}
	Note that the first inclusion has been proved in Theorem \ref{th:feas_prob}. Here it is repeated because we highlight that the inclusions in \eqref{lemma:inclusion:claim} hold \emph{jointly} with probability at least $1-\beta$. The inclusion relation in \eqref{lemma:inclusion:claim} provides useful insights on how to provide a performance bound such as \eqref{eq:bound_Jccp}, since an inclusion relation between feasible sets has a direct effect on the optimality of the obtained solution. 

	We introduce the following assumption on the cost function:
	\begin{assumption}\label{ass:Jlip_1}
		We assume that the function $J$ is Lipschitz continuous, in particular:
		$$|J(x, \theta_1) - J(x, \theta_2)| \leq L_\theta \|\theta_1 - \theta_2\|_q , \forall x\in\Xcal,\ \forall \theta_1, \theta_2\in\Dcal,$$
		$$|J(x_1, \theta) - J(x_2, \theta)| \leq L_x \|x_1 - x_2\|_2 , \forall \theta\in\Dcal,\forall x_1, x_2\in\Xcal,$$
		for some constants $L_\theta, L_x \in\R_{>0}$.
	\end{assumption}
	As we will see in Section \ref{subsec:pwa_details}, this assumption is satisfied in several cases, e.g., in the case of optimal control of nonlinear systems with Lipschitz dynamics and Lipschitz stage cost (e.g.\ the 1-norm, which is globally Lipschitz, or also the squared 2-norm, which is locally Lipschitz over bounded sets), and it is common when providing performance bounds, see e.g.\ \cite{kleywegt2002sample}, or \cite{rockafellar2009variational} Example 7.62. Also, we specify that $L_\theta$ is the Lipschitz constant with respect to a given $q$-norm, which can be chosen conveniently, e.g.\ based on the nature of the cost function $J$. Conversely, $L_x \in\R_{>0}$ is the Lipschitz constant specifically with respect to the $2$-norm, which is needed in the following derivations. This is without loss of generality, since all norms are equivalent over a finite-dimensional space.
	
	Let us state a preliminary lemma:
	\begin{lemma}\label{lemma:uniform_concentration}
		Let $\beta\in(0,1]$, and let Assumptions \ref{ass:iid} and \ref{ass:Jlip_1} hold. Let $D = \max_{\theta_1, \theta_2\in\Dcal} \|\theta_1 - \theta_2\|_q, R=\max_{x_1,x_2\in\Xcal}\|x_1-x_2\|_2$, and choose  $r\in(0, R]$. 
		Define $c_1$ and $c_2$ as
		\begin{align}\label{eq:c1,c2}
			\begin{split}
				&c_1 = \sqrt{\frac{L_\theta^2 D^2}{2N}\left(\log\frac{1}{\beta} + n\log\frac{3R}{r} \right)}\ , \ \ 
				c_2 = 2L_x r.
			\end{split}
		\end{align}
		Then, with probability at least $1-2\beta$, it holds that
		$$\max_{x\in\Xcal} \left|\E [J(x,\theta)] - \frac{1}{N} \sum_{i=1}^N J(x,\thetai)\right| \leq c_1 + c_2.$$
	\end{lemma}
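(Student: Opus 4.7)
The plan is to prove this uniform concentration inequality via a classical covering/$\varepsilon$-net argument combined with Hoeffding's inequality. The two terms $c_1$ and $c_2$ have distinct origins: $c_1$ arises from the concentration error at finitely many anchor points in $\Xcal$, while $c_2$ captures the continuity (Lipschitz) error incurred when extending the bound from those anchor points to the rest of $\Xcal$. The assumption $J$ bounded in a ``Lipschitz-in-$\theta$'' sense will let us invoke Hoeffding at each anchor.

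First, I would construct an $r$-net $\{x_1,\dots,x_M\}\subseteq\Xcal$, i.e.\ a set such that for every $x\in\Xcal$ there exists $x_k$ with $\|x-x_k\|_2\leq r$. Since $\Xcal$ has diameter $R$ in the $2$-norm, it is contained in a Euclidean ball of radius $R$, which by the standard volumetric argument can be covered by at most $M\leq (3R/r)^n$ balls of radius $r$; here $n$ is the dimension of $\Xcal$. Next, fix $x_k$ and set $Y_i:=J(x_k,\thetai)$. By Assumption~\ref{ass:Jlip_1} and the definition of $D$, the range of $Y_i$ is bounded by $L_\theta D$, so Hoeffding's inequality gives
\begin{align*}
\P^N\!\left(\left|\E[J(x_k,\theta)] - \tfrac{1}{N}\sum_{i=1}^N J(x_k,\thetai)\right| \geq t\right) \leq 2\exp\!\left(-\tfrac{2Nt^2}{L_\theta^2 D^2}\right).
\end{align*}
Applying a union bound over the $M$ anchors and setting the right-hand side equal to $2\beta$,
\begin{align*}
2M\exp\!\left(-\tfrac{2Nt^2}{L_\theta^2 D^2}\right) \leq 2\beta
\quad\Longleftrightarrow\quad
t \geq \sqrt{\tfrac{L_\theta^2 D^2}{2N}\bigl(\log(1/\beta)+\log M\bigr)},
\end{align*}
and bounding $\log M \leq n\log(3R/r)$, one finds that $t=c_1$ suffices. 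Hence, with probability at least $1-2\beta$,
\begin{align*}
\max_{k\in\{1,\dots,M\}}\left|\E[J(x_k,\theta)] - \tfrac{1}{N}\sum_{i=1}^N J(x_k,\thetai)\right| \leq c_1.
\end{align*}

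To upgrade from the net to all of $\Xcal$, I would use Lipschitz continuity of $J$ in $x$. For any $x\in\Xcal$, pick $x_k$ with $\|x-x_k\|_2\leq r$; Assumption~\ref{ass:Jlip_1} yields both $|\E[J(x,\theta)]-\E[J(x_k,\theta)]|\leq L_x r$ (pulling the Lipschitz bound through the expectation) and the analogous bound $|\frac{1}{N}\sum_i J(x,\thetai)-\frac{1}{N}\sum_i J(x_k,\thetai)|\leq L_x r$ for the empirical average. A triangle inequality combining these two deterministic estimates with the probabilistic bound on the anchors then gives, on the same $1-2\beta$ event,
\begin{align*}
\max_{x\in\Xcal}\left|\E[J(x,\theta)] - \tfrac{1}{N}\sum_{i=1}^N J(x,\thetai)\right| \leq c_1 + 2L_x r = c_1 + c_2,
\end{align*}
which is the claim.

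The main obstacle, beyond being careful with constants, is the choice of covering-number bound: the $(3R/r)^n$ estimate is standard but requires $r\leq R$ (exactly as assumed) and a specific choice of enclosing ball; I would make sure to state that $\Xcal\subseteq x_0\oplus R\B$ for some $x_0\in\Xcal$ so that the volumetric covering bound is directly applicable. Everything else reduces to a clean bookkeeping of the two error sources and a triangle inequality, and there is no hidden measurability concern since we are taking the maximum over a \emph{finite} net before extending by Lipschitz continuity.
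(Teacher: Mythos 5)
Your proposal is correct and follows essentially the same route as the paper: an $r$-net of $\Xcal$ with the $(3R/r)^n$ covering bound, pointwise concentration at the net centers with range $L_\theta D$, a union bound calibrated so the failure probability is $2\beta$, and a Lipschitz-in-$x$ triangle inequality contributing the $2L_x r$ term. The only cosmetic difference is that you invoke Hoeffding's inequality directly, whereas the paper applies McDiarmid's inequality to the sum $\sum_i J(\txk,\thetai)$ with bounded differences $L_\theta D$ — which in this setting is the same bound.
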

	\begin{proof}
		See Appendix A.
	\end{proof}
	
	We can now provide our main result about optimal values:
	\begin{theorem}\label{th:optimal_values}
		Let $\delta\in(0,\veps]$, $\beta\in(0,1]$, and let Assumptions \ref{ass:iid}-\ref{ass:Jlip_1} hold. Let $D = \max_{\theta_1, \theta_2\in\Dcal} \|\theta_1 - \theta_2\|_q, R=\max_{x_1,x_2\in\Xcal}\|x_1-x_2\|_2$, and choose  $r\in(0, R]$. Define $c_1$ and $c_2$ as in Lemma \ref{lemma:uniform_concentration}, and $c_3$ as
		\begin{align*}
			c_3 = \frac{1}{N}\sum_{j=1}^{K} \sum_{i\in\Ccalj} L_{\theta} \|\hthetaj - \thetai\|_q.
		\end{align*}
		Then, with $c=c_1+c_2+c_3$, 
		\begin{align}\label{th:optimal_values:claim}
			\P^N\left(J^\star_{\text{RP}_{\veps+\delta}} - c \leq J^\star_{\text{CP}_{\veps}} \leq J^\star_{\text{PP}_{\veps-\delta}} + c \right) \geq 1-3\beta.
		\end{align}
	\end{theorem}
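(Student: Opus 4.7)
The plan is to combine the set inclusions from Lemma \ref{lemma:inclusion} with a uniform bound relating the true expected cost $\E[J(x,\theta)]$ to the partition-based cost $\hat J(x):=\sum_{j=1}^{K}\hpj J(x,\hthetaj)$. The inclusions transfer optimizers across the three problems, while the uniform cost bound compensates for the mismatch between the objectives of CP$_\veps$ on one side and of PP$_{\veps-\delta}$, RP$_{\veps+\delta}$ on the other.

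First I would establish that, deterministically,
\begin{align*}
\bigl|\hat J(x) - \tfrac{1}{N}\textstyle\sum_{i=1}^{N} J(x,\thetai)\bigr|
= \Bigl|\tfrac{1}{N}\sum_{j=1}^{K}\sum_{i\in\Ccalj}\!\bigl(J(x,\hthetaj)-J(x,\thetai)\bigr)\Bigr|
\le \tfrac{1}{N}\sum_{j=1}^{K}\sum_{i\in\Ccalj} L_\theta\|\hthetaj-\thetai\|_q = c_3,
\end{align*}
for all $x\in\Xcal$, using $\hpj=|\Ccalj|/N$ and Assumption \ref{ass:Jlip_1}. Combining with Lemma \ref{lemma:uniform_concentration} and a triangle inequality, I obtain, with $\P^N$-probability at least $1-2\beta$,
\begin{align*}
\sup_{x\in\Xcal}\bigl|\E[J(x,\theta)]-\hat J(x)\bigr|\le c_1+c_2+c_3 = c. \tag{$\star$}
\end{align*}

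Next I would apply the inclusions of Lemma \ref{lemma:inclusion}, which hold jointly with probability at least $1-\beta$. For the upper bound, let $x^\star_{\mathrm{PP}}$ be an optimizer of PP$_{\veps-\delta}$; by the first inclusion it is feasible for CP$_\veps$, so $J^\star_{\mathrm{CP}_\veps}\le\E[J(x^\star_{\mathrm{PP}},\theta)]\le\hat J(x^\star_{\mathrm{PP}})+c=J^\star_{\mathrm{PP}_{\veps-\delta}}+c$, using $(\star)$. For the lower bound, let $x^\star_{\mathrm{CP}}$ be an optimizer of CP$_\veps$; by the second inclusion it is feasible for RP$_{\veps+\delta}$, so $J^\star_{\mathrm{RP}_{\veps+\delta}}\le\hat J(x^\star_{\mathrm{CP}})\le\E[J(x^\star_{\mathrm{CP}},\theta)]+c=J^\star_{\mathrm{CP}_\veps}+c$.

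Finally, the two probabilistic events (the inclusion chain, and $(\star)$) must hold simultaneously; a union bound yields a total failure probability of at most $\beta+2\beta=3\beta$, giving the claimed confidence $1-3\beta$. The main obstacle in the argument is getting a truly \emph{uniform} control of $\E[J(x,\theta)]-\hat J(x)$ over $x\in\Xcal$ (not merely pointwise), which is the reason we must invoke Lemma \ref{lemma:uniform_concentration} with its covering-number term $c_1$ and discretization error $c_2$, and pay the Lipschitz-in-$\theta$ cost $c_3$ for replacing samples by cluster representatives; everything else reduces to pushing optimizers through the two inclusions.
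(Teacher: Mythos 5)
Your proposal is correct and follows essentially the same route as the paper's proof: the same triangle-inequality decomposition through the empirical average (giving $c_1+c_2$ from Lemma \ref{lemma:uniform_concentration} and the deterministic $c_3$ term), combined with the inclusions of Lemma \ref{lemma:inclusion} via a union bound to reach confidence $1-3\beta$. If anything, you make explicit the final step of pushing the optimizers of PP$_{\veps-\delta}$ and CP$_\veps$ through the inclusions, which the paper only states implicitly.
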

	\begin{proof}
		For a given $x\in\Xcal$, we have, in view of the triangle inequality
		\begin{align*}
			&\Big|\E [J(x,\thetai)]  - \sum_{j=1}^K \hpj J(x,\hthetaj)\Big|
			\\& \leq \begin{aligned}[t]
				&\Big|\E [J(x,\theta)]  - \frac{1}{N}\sum_{i=1}^N J(x,\thetai)\Big|
				\\& + \Big|\frac{1}{N}\sum_{i=1}^N J(x,\thetai)  - \sum_{j=1}^K \hpj J(x,\hthetaj)\Big|.
			\end{aligned} 
		\end{align*}
		To bound the first term we can apply Lemma \ref{lemma:uniform_concentration}, whereas for the second we have, in view of the definition for $\hpj$ in \eqref{eq:hpj} and using the triangle inequality
		\begin{align*}
			&\Big|\frac{1}{N}\sum_{i=1}^N J(x,\thetai)  - \sum_{j=1}^K \hpj J(x,\hthetaj)\Big|
			\\&= \Big|\frac{1}{N}\sum_{j=1}^K \sum_{i\in\Ccalj} J(x,\thetai)  - \sum_{j=1}^K\sum_{i\in\Ccalj} \frac{1}{N} J(x,\hthetaj)\Big|
			\\& \leq \frac{1}{N} \sum_{j=1}^K\sum_{i\in\Ccalj}|J(x,\thetai) - J(x,\hthetaj)|
			\\& \leq \frac{1}{N}\sum_{j=1}^{K} \sum_{i\in\Ccalj} L_{\theta} \|\thetai - \hthetaj\|_q = c_3,
		\end{align*}
		where the last relation holds in view Assumption \ref{ass:Jlip_1}. Hence, together with Lemma \ref{lemma:uniform_concentration}, the following uniform concentration bound
		\begin{align}\label{proof:optimal_values:bound}
			\max_{x\in\Xcal}\Big|\E [J(x,\theta)] - \sum_{j=1}^K \hpj J(x,\hthetaj)\Big| \leq c
		\end{align}
		holds with probability at least $1-2\beta$, where $c=c_1+c_2+c_3$. Together with Lemma \ref{lemma:inclusion}, a union bound yields:
		\begin{align}\label{proof:optimal_values:inclusion}
			\begin{split}
				\P^N\Big(&\max_{x\in\Xcal}\Big|\E [J(x,\theta)] - \sum_{j=1}^K \hpj J(x,\hthetaj)\Big| \leq c , 
				\\& \Fcal_{\text{PP}_{\veps-\delta}} \subseteq \Fcal_{\text{CP}_\veps} \subseteq \Fcal_{\text{RP}_{\veps+\delta}} \Big) \geq 1-3\beta.  
			\end{split}
		\end{align}
		Then, the uniform bound and the inclusions in \eqref{proof:optimal_values:inclusion} determine the following ordering relation between the optimal values:
		$$J^\star_{\text{RP}_{\veps+\delta}} - c \leq J^\star_{\text{CP}_{\veps}} \leq J^\star_{\text{PP}_{\veps-\delta}} + c,$$
		which holds at least with probability $1-3\beta$.
	\end{proof}
	
	Note that Theorem \ref{th:optimal_values} bounds $J^\star_{\text{CP}_{\veps}}$, which in general is difficult to compute even for simple functions $J$ and $g$, with two quantities that are easy to compute whenever the problems PP$_\veps$ and RP$_{\veps}$ can be solved efficiently, which is the case for a large class of functions $J$ and $g$ (see Section \ref{sec:discussions}).
	
	The bound in \eqref{th:optimal_values:claim} depends on three components: $c_1$ is the error due to sampling and it is decreasing in $N$; $c_2$ describes the variability of the function $J$, and it can be made small by choosing $r$ small at the cost of a mild increase in the term $c_1$ (note the logarithmic dependence); and $c_3$ is a measure of the accuracy of the partition, which becomes smaller and smaller as $K$ increases. We also note that $c_1$ depends also on the number of decision variables $n$. This seems to be a common feature when providing uniform probabilistic bounds as in Lemma \ref{lemma:uniform_concentration}, since the solution of PP$_{\veps-\delta}$ and RP$_{\veps+\delta}$ is in principle data-dependent.
	Note that, if only feasibility is of interest, or if nominal performances are optimized in CP$_\veps$, $n$ would not play any role in the proposed results.

	\subsection{Local continuity of the optimal value}
	
	Theorem \ref{th:optimal_values} provides the sought bound \eqref{eq:bound_Jccp}, which can be obtained by solving $\text{PP}_{\veps-\delta}$ and $\text{RP}_{\veps+\delta}$. In this section, we derive a theoretical bound for 
	\begin{align}\label{eq:detBound}
		|J^\star_{\text{PP}_{\veps-\delta}} -  \ J^\star_{\text{CP}_\veps}|,
	\end{align}
	in order to find a mathematical expression that characterizes the source of the optimality gap between the problem that we actually solve, i.e.\ PP$_{\varepsilon-\delta}$, and the problem that we would ideally solve, i.e.\ CP$_{\varepsilon}$, without going through the solution of the auxiliary problem RP$_{\varepsilon+\delta}$.

	Bounding \eqref{eq:detBound} is in principle difficult, since a direct study of $|J_{\text{CP}_\veps} - J_{\text{PP}_{\veps-\delta}}|$ requires to evaluate the distance between $\Fcal_{\text{CP}_{\veps}}$ and $\Fcal_{\text{PP}_{\veps-\delta}}$, which can be difficult since $\Fcal_{\text{CP}_{\veps}}$ can be a very complex set (possibly non-convex) for a general distribution $\P$. However, we observe that deriving an optimality bound for 
	\begin{align}\label{eq:bound_rp_pp}
		J^\star_{\text{PP}_{\veps-\delta}} - J^\star_{\text{RP}_{\veps+\delta}},
	\end{align}
	will lead to a bound for \eqref{eq:detBound}, by combining \eqref{th:optimal_values:claim} with the bound for \eqref{eq:bound_rp_pp} (note that \eqref{eq:bound_rp_pp} is always non-negative). In particular, since problems PP$_{\veps-\delta}$ and RP$_{\veps+\delta}$ optimize the same cost function, the performance bound \eqref{eq:bound_rp_pp} depends only on the distance between the constraint sets, i.e.\ respectively $\Fcal_{\text{PP}_{\veps-\delta}}$ and $\Fcal_{\text{RP}_{\veps+\delta}}$, which have a similar structure in view of the discrete approximation in PP$_{\veps-\delta}$ and RP$_{\veps+\delta}$.
	
	Leveraging tools from variational analysis \cite{rockafellar2009variational}, in order to evaluate how much the constraints set differ from each other, we consider the following definition:
	\begin{definition}\label{def:hausd}(Hausdorff distance, cf. \cite{rockafellar2009variational}, Chapter 4).
		Let $\Acal, \Bcal \subset \R^n$ be closed and non-empty.
		The Hausdorff distance between $\Acal$ and $\Bcal$ is defined as
		\begin{align*}
			\mathbb{d}(\Acal, \Bcal) := \inf \left\{\rho \geq 0 : \Acal\subseteq\Bcal \oplus \rho \B , \Bcal\subseteq\Acal \oplus \rho \B \right\}.
		\end{align*}
	\end{definition}
	As we will see in the next lemma, \eqref{eq:bound_rp_pp} can be bounded in terms of the Hausdorff distance between $\Fcal_{\text{RP}_{\veps+\delta}}$ and $ \Fcal_{\text{PP}_{\veps-\delta}}$.
	\begin{lemma}\label{lemma:bound_hdist}
		Let us assume that $\Fcal_{\text{RP}_{\veps+\delta}}, \Fcal_{\text{PP}_{\veps-\delta}}$ are non-empty.
		Then, it holds that
		\begin{align}\label{lemma:optimality_hdist_claim0}
			0 \leq J^\star_{\text{PP}_{\veps-\delta}} - J^\star_{\text{RP}_{\veps+\delta}} 
			\leq L_x \mathbb{d}(\Fcal_{\text{PP}_{\veps-\delta}}, \Fcal_{\text{RP}_{\veps+\delta}}).
		\end{align}
	\end{lemma}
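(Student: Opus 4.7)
The plan is to split the claim into two inequalities and exploit the fact that $\text{PP}_{\veps-\delta}$ and $\text{RP}_{\veps+\delta}$ share the same discrete cost $\sum_{j=1}^K \hpj J(\cdot,\hthetaj)$, so the optimality gap is driven entirely by the geometry of their feasible sets. First I would establish the non-negativity $J^\star_{\text{PP}_{\veps-\delta}} - J^\star_{\text{RP}_{\veps+\delta}} \geq 0$ by showing the set inclusion $\Fcal_{\text{PP}_{\veps-\delta}} \subseteq \Fcal_{\text{RP}_{\veps+\delta}}$, from which the monotonicity of the minimum over a shrinking feasible set directly yields the inequality.

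To obtain this inclusion, fix $x \in \Fcal_{\text{PP}_{\veps-\delta}}$ and examine each index $j$ for which the PP indicator equals one, i.e., there exists $\bar h$ with $g_{\bar h}(x,\theta)\leq 0$ for all $\theta\in\Dcalj$. Combining this with definition \eqref{eq:gamma} evaluated at any such $\theta$ yields $g_{\bar h}(x,\hthetaj) \leq g_{\bar h}(x,\theta) + \gammaj_{\bar h}(x,\hthetaj) \leq \gammaj_{\bar h}(x,\hthetaj)$, so the RP indicator at $j$ is also one. Summing, the RP-sum dominates the PP-sum pointwise in $j$, and since $1-(\veps-\delta) \geq 1-(\veps+\delta)$, the RP constraint is satisfied by $x$.

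For the upper bound, I would use the definition of the Hausdorff distance: for every $\rho > \mathbb{d}(\Fcal_{\text{PP}_{\veps-\delta}}, \Fcal_{\text{RP}_{\veps+\delta}})$ and every $\eta>0$, pick an $\eta$-optimizer $x_{\text{RP}} \in \Fcal_{\text{RP}_{\veps+\delta}}$ of RP$_{\veps+\delta}$; by Definition \ref{def:hausd} there is $\tilde{x} \in \Fcal_{\text{PP}_{\veps-\delta}}$ with $\|\tilde x - x_{\text{RP}}\|_2 \leq \rho$. Using Assumption \ref{ass:Jlip_1} componentwise and $\sum_j \hpj = 1$,
\begin{align*}
J^\star_{\text{PP}_{\veps-\delta}} &\leq \sum_{j=1}^K \hpj J(\tilde x, \hthetaj) \leq \sum_{j=1}^K \hpj J(x_{\text{RP}}, \hthetaj) + L_x\|\tilde x - x_{\text{RP}}\|_2 \\
&\leq J^\star_{\text{RP}_{\veps+\delta}} + \eta + L_x \rho .
\end{align*}
Letting $\eta\downarrow 0$ and $\rho\downarrow \mathbb{d}(\Fcal_{\text{PP}_{\veps-\delta}}, \Fcal_{\text{RP}_{\veps+\delta}})$ closes the bound.

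The main obstacle is the inclusion step: the constraints of PP and RP look superficially opposite (one tightens, the other relaxes), so one must carefully track both the risk-parameter shift $\pm\delta$ and the interaction between the robust max-over-$\Dcalj$ condition and the relaxation by $\gammaj_h$. Apart from that, non-emptiness of both feasible sets (assumed in the statement) is what guarantees that the $\eta$-optimizers exist and the Hausdorff distance is finite, so the limiting argument is clean.
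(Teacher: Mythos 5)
Your proof is correct and follows essentially the same route as the paper: non-negativity via the deterministic inclusion $\Fcal_{\text{PP}_{\veps-\delta}}\subseteq\Fcal_{\text{RP}_{\veps+\delta}}$ together with the shared discrete objective, and the upper bound by picking a point of $\Fcal_{\text{PP}_{\veps-\delta}}$ within (roughly) Hausdorff distance of the RP optimizer and invoking the Lipschitz continuity of $J$ in $x$. The only differences are refinements: you prove the inclusion explicitly (the paper asserts it), and you use $\eta$-optimizers with $\rho\downarrow\mathbb{d}$ instead of assuming exact minimizers and attainment, which is slightly more careful but not a different argument.
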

	\begin{proof}
		See Appendix A.
	\end{proof}
	Note that this bound is deterministic, i.e., it holds for any possible sample set $\{\thetai\}_{i=1}^N$, since once a sample set $\{\thetai\}_{i=1}^N$ is given, PP$_{\veps-\delta}$ and RP$_{\veps+\delta}$ are deterministic optimization problems.
	
	The Hausdorff distance between sets is not always easy to compute or bound. Thus, in this section, we focus on a specific type of constraint function defined by
	\begin{align}\label{eq:g_linear}
		g_h(x,\theta) = C_h x + D_h\theta + b_h, \ \forall h\in\{1,...,Z\},
	\end{align}
	where $C_h, D_h$, and $b_h$, are matrices and vectors of appropriate dimension, $\forall h\in\{1,...,Z\}$. As we will see in Section \ref{sec:example}, this class of constraints includes, e.g., controlled dynamical systems with logical constraints (e.g., PWA systems), with additive uncertainty.
	
	In this setting, we observe that the robustified chance constraint in PP$_\veps$ reads as:
	$$\sum_{j=1}^K \hpj \mathbf{1}_{[\lor_{h=1}^Z (C_h x + D_h\theta + b_h \leq 0, \forall \theta\in\Dcalj)]} \geq 1-\veps ,$$
	and for constraint functions as in \eqref{eq:g_linear} we have
	\begin{align}\label{eq:robust_linear}
		\Fcal_{\text{PP}_{\veps}} = 
		\{&x\in\Xcal:	
		\\&\sum_{j=1}^K \hpj \mathbf{1}_{[\lor_{h=1}^Z (C_h x + D_h\hthetaj + b_h \leq -\tau^{(j)}_{h})]} \geq 1-\veps\}, \nonumber
	\end{align}
	where, for $j\in\{1,...,K\}$ and $h\in\{1,...,Z\}$, the $l$-th component of the vector $\tau^{(j)}_{h}$ satisfies
	\begin{align}\label{eq:tight_linear}
		\tau^{(j)}_{h, l} = \max_{\theta\in\Dcalj} D_{h, l} (\theta-\hthetaj),
	\end{align}
	where $D_{h, l}$ is the $l$-th row of the matrix $D_h$. Similarly, from \eqref{eq:gamma}, we have
	\begin{align}
		\Fcal_{\text{RP}_{\veps}} = 
		\{&x\in\Xcal:	\label{eq:relaxed_linear}
		\\&\sum_{j=1}^K \hpj \mathbf{1}_{[\lor_{h=1}^Z (C_h x + D_h\hthetaj + b_h \leq \gammaj_{h})]} \geq 1-\veps\}, \nonumber
	\end{align}
	with
	\begin{align}\label{eq:gamma_linear}
		\gammaj_{h, l} = \max_{\theta\in\Dcalj} D_{h, l} (\hthetaj - \theta),
	\end{align}
	thus by relaxing each row of $g_h, h\in\{1,...,Z\}$.
	Essentially, for the special constraint function in \eqref{eq:g_linear}, the robust and relaxed constraint in PP$_\veps$, RP$_\veps$, are reformulated in terms of a representative element for each region $\Dcalj$, which in our case is $\thetaj, j\in\{1,...,K\}$, and a correction term in the right-hand side of the inequality.
	In particular, if $\Dcalj$ are polytopic sets, the maximization in \eqref{eq:tight_linear} and \eqref{eq:gamma_linear} can be found explicitly via linear programming. In view of \eqref{eq:robust_linear} and \eqref{eq:relaxed_linear}, the feasible set $\Fcal_{\text{PP}_\veps}$ of PP$_\veps$, and the feasible set $\Fcal_{\text{RP}_\veps}$ of RP$_\veps$, are the union of, at most, $M=2^{K+Z}$ polytopic sets, i.e.:
	\begin{align}\label{proof:hdist_bound:tFcal}
		\Fcal_{\text{PP}_\veps} = \bigcup_{k=1}^M \Fcal_{\text{PP}_\veps}^k, \quad
		\Fcal_{\text{RP}_\veps} = \bigcup_{k=1}^M \Fcal_{\text{RP}_\veps}^k, 
	\end{align}
	with 
	\begin{align}
		\Fcal_{\text{PP}_\veps}^k = \{x: \ & C_h x + D_h\hthetaj + b_h \leq -\tau_{h}^{(j)},	\label{proof:hdist_bound:tF_PP_k}
		\\& \forall h\in\mathcal{H}_k, \forall j\in\mathcal{J}_k \}, \nonumber
	\end{align}
	\begin{align}
		\Fcal_{\text{RP}_\veps}^k = \{x: \ & C_h x + D_h\hthetaj + b_h \leq \gammaj_h, \label{proof:hdist_bound:tF_RP_k}
		\\& \forall h\in\mathcal{H}_k, \forall j\in\mathcal{J}_k \}\}, \nonumber
	\end{align}
	for all possible combinations of $\mathcal{H}_k\subseteq\{1,...,Z\}$ and $ \mathcal{J}_k \subseteq\{1,...,K\}$ such that 
	$\sum_{j\in\Jcal_k} \hpj \geq 1-\veps$ (thus in number at most $M$). Note that, even if the single sets in the unions in \eqref{proof:hdist_bound:tFcal} are polytopes, the resulting unions are in general non-convex.
	
	First, we provide an analytical bound on the Hausdorff distance for union of polytopes as in \eqref{proof:hdist_bound:tFcal}.
	\begin{lemma}\label{lemma:Hdist_polytopes}
		Let us assume that $\Fcal_{\text{PP}_\veps}^k, \Fcal_{\text{RP}_\veps}^k$ are non-empty, $\forall k\in\{1,...,M\}$, and consider $\Fcal_{\text{PP}_\veps}, \Fcal_{\text{RP}_\veps}$ defined as in \eqref{proof:hdist_bound:tFcal}-\eqref{proof:hdist_bound:tF_RP_k}. Then, we have:
		$$\d(\Fcal_{\text{PP}_\veps}, \Fcal_{\text{RP}_\veps}) \leq \max_{h\in\{1,...,Z\}}\max_{j\in\{1,...,K\}} \frac{\|\gamma_{h}^{(j)} - \tau_{h}^{(j)}\|_2}{\sigma(C_h)},$$
		where $\sigma(C_h)$ is the minimum of the smallest singular values, where the minimum is taken over all the square invertible submatrices of $C_h$.
	\end{lemma}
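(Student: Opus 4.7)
The plan is two-fold: first reduce the Hausdorff distance between the two unions of polytopes to a maximum over the Hausdorff distances of corresponding pieces, and then bound each pairwise distance via a Hoffman-type argument keyed to the minimum singular values of invertible square submatrices of $C_h$.

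For the reduction, I would use the elementary fact that for matched sequences of non-empty closed sets $\{A_k\}, \{B_k\}$ one has $\d(\bigcup_k A_k, \bigcup_k B_k) \leq \max_k \d(A_k, B_k)$. This follows directly from Definition \ref{def:hausd}: any $x \in \bigcup_k A_k$ lies in some $A_{k^\star}$, which is contained in $B_{k^\star} \oplus \d(A_{k^\star},B_{k^\star})\B$ and hence in $\bigcup_k B_k \oplus \max_k \d(A_k,B_k)\B$, and a symmetric argument handles the reverse inclusion. Applied to the decompositions in \eqref{proof:hdist_bound:tFcal}, this reduces the claim to bounding $\d(\Fcal_{\text{PP}_\veps}^k, \Fcal_{\text{RP}_\veps}^k)$ uniformly in $k$.

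Since $\tau_h^{(j)}$ and $\gamma_h^{(j)}$ are componentwise non-negative, the RP right-hand side dominates the PP one and hence $\Fcal_{\text{PP}_\veps}^k \subseteq \Fcal_{\text{RP}_\veps}^k$; only the reverse direction of the Hausdorff distance is non-trivial. Given $x \in \Fcal_{\text{RP}_\veps}^k$, I would construct $y = x + \Delta \in \Fcal_{\text{PP}_\veps}^k$ by solving the minimum-norm displacement problem $\min\|\Delta\|_2$ subject to $C_h\Delta \leq -\tau_h^{(j)} - (C_h x + D_h\hthetaj + b_h)$ for every $(h,j)\in\mathcal{H}_k\times\mathcal{J}_k$. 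The hypothesis $x\in\Fcal_{\text{RP}_\veps}^k$ bounds each component of the right-hand side in terms of $\gamma_{h,l}^{(j)}$ and $\tau_{h,l}^{(j)}$. At an optimizer $\Delta^\star$, standard basic-feasible-solution / KKT reasoning selects an invertible square subsystem $\tilde C \Delta^\star = \tilde r$ whose rows come from some $C_h$, giving $\|\Delta^\star\|_2 \leq \|\tilde r\|_2/\sigma_{\min}(\tilde C) \leq \|\tilde r\|_2/\sigma(C_h)$ by the very definition of $\sigma(C_h)$; bounding $\|\tilde r\|_2$ by the corresponding $(h,j)$-indexed norm and passing to the maximum over $(h,j)$ then yields the stated inequality.

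The step I expect to require the most care is this last one: rigorously isolating the contribution of a single $(h,j)$ pair and showing that the active set at $\Delta^\star$ can be restricted to rows of a single $C_h$, so that $\sigma(C_h)$ rather than a cross-block singular value appears in the denominator. A clean route is to invoke Hoffman's lemma directly, with Hoffman constant bounded by $1/\sigma(C_h)$, together with a triangle-style decomposition of the right-hand-side residual across $(h,j)$ blocks; degenerate optimizers lying on faces of dimension less than $n$ would be handled via a small perturbation of the active constraints combined with continuity of the Hausdorff distance in the constraint data.
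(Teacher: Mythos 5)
Your first step---reducing $\d(\Fcal_{\text{PP}_\veps},\Fcal_{\text{RP}_\veps})$ to $\max_k \d(\Fcal_{\text{PP}_\veps}^k,\Fcal_{\text{RP}_\veps}^k)$ for the matched pieces of the two unions---is correct and is essentially the same reduction the paper performs (the paper argues via distributivity of the Minkowski sum over unions; your pointwise argument is equivalent). The divergence, and the problem, is in the second step. The paper exploits the fact that $\Fcal_{\text{PP}_\veps}^k$ and $\Fcal_{\text{RP}_\veps}^k$ are polytopes with the \emph{same} stacked constraint matrix $A^k$ (rows $C_h$, $h\in\Hcal_k$, repeated over $j\in\Jcal_k$) and right-hand sides that differ only by the tightening/relaxation vectors; it then matches vertices one-to-one through the same invertible $n\times n$ row-submatrices $A^k_{[i]}$, so the gap between the two sets is read off in closed form as $(A^k_{[i]})^{-1}$ applied to the right-hand-side shift, and bounded via $\|(A^k_{[i]})^{-1}\|_2\le 1/\sigma(\cdot)$. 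No projection, KKT, or Hoffman-type machinery is needed, precisely because the constraint normals are shared and only the offsets move.

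Your route through a minimum-norm displacement and a Hoffman-type bound founders exactly at the step you flag, and the fix you sketch does not close it. The feasibility system defining $\Fcal_{\text{PP}_\veps}^k$ enforces the rows of \emph{all} $C_h$, $h\in\Hcal_k$, simultaneously, so the active (basic) rows at the optimal displacement $\Delta^\star$ will in general mix rows coming from different blocks $h$. The invertible subsystem $\tilde C$ is then a submatrix of the stacked matrix $A^k$, not of any single $C_h$, and its smallest singular value is only bounded below by $\sigma(A^k)$, which can be strictly smaller than every $\sigma(C_h)$ (every square invertible submatrix of a block $C_h$ is also one of $A^k$, but $A^k$ has additional cross-block submatrices). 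Consequently the constant your argument actually produces is $1/\sigma(A^k)$, which does not imply the stated bound with $\max_h 1/\sigma(C_h)$; asserting that ``the Hoffman constant is bounded by $1/\sigma(C_h)$'' is exactly the missing claim, and it is false in general for the stacked system. A secondary bookkeeping issue: for a row that is tight in $\Fcal_{\text{RP}_\veps}^k$, the displacement your construction must absorb is of size $\gamma^{(j)}_{h,l}+\tau^{(j)}_{h,l}$ per component (both quantities are nonnegative), so your residual bound naturally involves $\gamma_h^{(j)}+\tau_h^{(j)}$ rather than the difference appearing in the statement; in the paper this quantity arises directly as the shift between the paired right-hand sides, with no per-point active-set analysis. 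The perturbation-plus-continuity remark for degenerate faces is fine in spirit but does not address the cross-block issue, which is the genuine gap in the proposal.
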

	\begin{proof}
		See Appendix A.
	\end{proof}
	Note that Lemma \ref{lemma:Hdist_polytopes} states that the Hausdorff distance between $\Fcal_{\text{PP}_\veps}$ and $\Fcal_{\text{RP}_\veps}$ is bounded by the worst-case distance between each component of the union in \eqref{proof:hdist_bound:tFcal}.
	
	We can now provide the following result:
	\begin{lemma}\label{lemma:optimality_hdist}
		 Let Assumptions \ref{ass:iid}-\ref{ass:Jlip_1} hold. Let us assume that the functions $g_h$ are of the type \eqref{eq:g_linear}, for $\forall h\in\{1,...,Z\}$, and that  $\Fcal_{\text{PP}_\veps}^k, \Fcal_{\text{RP}_\veps}^k$ are non-empty, $\forall k\in\{1,...,M\}$, and suppose that 
		\begin{align}\label{lemma:feas_set_jump:assum}
			\veps \neq \sum_{j\in\Jcal} \hpj, \ \forall \Jcal\subset\{1,...,K\}.
		\end{align}
		Then, for any $\delta$ in the interval
		\begin{align}\label{lemma:feas_set_jump:delta_interval}
			\Biggl(0, \ \min_{\Jcal\subset\{1,...,K\}} \biggl|\veps - \sum_{j\in\Jcal} \hpj \biggr|\Biggr),
		\end{align}
		with probability at least $1-3\beta$, it holds that
		\begin{align}\label{lemma:optimality_hdist_claim}
			&J^\star_{\text{CP}_{\veps}} - J^\star_{\text{PP}_{\veps-\delta}}  \in
			\left[ - L_x \max_{\substack{j\in\{1,...,K\} \\ h\in\{1,...,Z\}}} \frac{\|\tau^{(j)}_{h}-\gammaj_h\|_2}{\sigma(C_h)} - c, \ c \right]. 
		\end{align}
	\end{lemma}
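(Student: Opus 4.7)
The plan is to combine the two-sided probabilistic bound of Theorem \ref{th:optimal_values} with the deterministic estimates of Lemmas \ref{lemma:bound_hdist} and \ref{lemma:Hdist_polytopes}, using the non-degeneracy conditions \eqref{lemma:feas_set_jump:assum}--\eqref{lemma:feas_set_jump:delta_interval} to align the risk levels. First, on the event of probability at least $1-3\beta$ furnished by Theorem \ref{th:optimal_values}, I would read off the chain
\begin{align*}
J^\star_{\text{RP}_{\veps+\delta}} - c \leq J^\star_{\text{CP}_{\veps}} \leq J^\star_{\text{PP}_{\veps-\delta}} + c.
\end{align*}
The right inequality already yields $J^\star_{\text{CP}_\veps} - J^\star_{\text{PP}_{\veps-\delta}} \leq c$, which is the upper endpoint of \eqref{lemma:optimality_hdist_claim}. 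For the lower endpoint, I would rearrange the left inequality as
\begin{align*}
J^\star_{\text{CP}_\veps} - J^\star_{\text{PP}_{\veps-\delta}} \geq -\bigl(J^\star_{\text{PP}_{\veps-\delta}} - J^\star_{\text{RP}_{\veps+\delta}}\bigr) - c,
\end{align*}
so it suffices to upper-bound the non-negative gap $J^\star_{\text{PP}_{\veps-\delta}} - J^\star_{\text{RP}_{\veps+\delta}}$.

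Second, I would apply Lemma \ref{lemma:bound_hdist}, which is purely deterministic and only uses that PP and RP share the same objective, to obtain
\begin{align*}
J^\star_{\text{PP}_{\veps-\delta}} - J^\star_{\text{RP}_{\veps+\delta}} \leq L_x\,\d\bigl(\Fcal_{\text{PP}_{\veps-\delta}},\Fcal_{\text{RP}_{\veps+\delta}}\bigr).
\end{align*}
Lemma \ref{lemma:Hdist_polytopes}, however, is stated at a common risk level, so to invoke it I first need to argue that under \eqref{lemma:feas_set_jump:delta_interval} one has $\Fcal_{\text{PP}_{\veps-\delta}} = \Fcal_{\text{PP}_{\veps}}$ and $\Fcal_{\text{RP}_{\veps+\delta}} = \Fcal_{\text{RP}_{\veps}}$. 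This is the main obstacle of the proof. The key observation is that for any fixed $x\in\Xcal$ the quantity $\sum_{j=1}^K \hpj \mathbf{1}_{[\cdot]}$ appearing in the constraints of PP and RP takes only finitely many values, each of the form $\sum_{j\in\Jcal}\hpj$ with $\Jcal\subseteq\{1,\ldots,K\}$. Therefore, as $\veps'$ varies continuously, the feasible set can jump only when $1-\veps'$ hits such a value; equivalently, via the bijection $\Jcal\mapsto\Jcal^c$ together with $\sum_j\hpj=1$, only when $\veps'$ itself hits a value $\sum_{j\in\Jcal}\hpj$. Assumption \eqref{lemma:feas_set_jump:assum} excludes $\veps$ from this finite set, and the radius in \eqref{lemma:feas_set_jump:delta_interval} excludes every other point of the interval $[\veps-\delta,\veps+\delta]$, so the feasible sets are invariant on this interval.

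Finally, because $\Fcal_{\text{PP}_\veps}^k$ and $\Fcal_{\text{RP}_\veps}^k$ are assumed non-empty for every $k$, I would apply Lemma \ref{lemma:Hdist_polytopes} at the common level $\veps$ to obtain
\begin{align*}
\d(\Fcal_{\text{PP}_\veps},\Fcal_{\text{RP}_\veps}) \leq \max_{\substack{j\in\{1,\ldots,K\}\\ h\in\{1,\ldots,Z\}}} \frac{\|\gamma_h^{(j)}-\tau_h^{(j)}\|_2}{\sigma(C_h)},
\end{align*}
and feed this back into the chain above to recover the left endpoint of \eqref{lemma:optimality_hdist_claim}. The confidence level $1-3\beta$ is inherited directly from Theorem \ref{th:optimal_values}, since all subsequent manipulations are deterministic.
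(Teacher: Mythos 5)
Your proposal is correct and follows essentially the same route as the paper: combining Theorem \ref{th:optimal_values} with Lemmas \ref{lemma:bound_hdist} and \ref{lemma:Hdist_polytopes}, and using the fact that the constraint left-hand side takes finitely many values $\sum_{j\in\Jcal}\hpj$ so that, under \eqref{lemma:feas_set_jump:assum}--\eqref{lemma:feas_set_jump:delta_interval}, the feasible sets do not change when the risk level is perturbed by $\delta$. The only cosmetic difference is that you freeze both feasible sets at level $\veps$ before invoking Lemma \ref{lemma:Hdist_polytopes}, whereas the paper establishes $\Fcal_{\text{RP}_{\veps-\delta}}=\Fcal_{\text{RP}_{\veps+\delta}}$ and evaluates the Hausdorff bound at the common level $\veps-\delta$; both are equally valid.
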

	\begin{proof}
		See Appendix A.
	\end{proof}

	\begin{figure}
		\centering
		\includegraphics[scale=1]{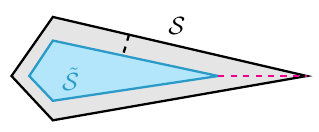}
		\caption{A 2-dimensional polytope $\Scal:=\{x:Ax\leq b\}$, where $\sigma(A)$ is small. A small tightening (black dashed line) originates $\tScal$, and it may still lead to high distance between $\Scal$ and $\tScal$ (pink dashed line), and thus a potentially large optimality gap.}
		\label{fig:hdist}
	\end{figure}
	
	We observe the following aspects deriving from Lemma \ref{lemma:optimality_hdist}:
	\begin{enumerate}
		\item In general, the value of a chance-constrained program with a discrete distribution can be discontinuous with respect to changes in $\varepsilon$. Condition \eqref{lemma:feas_set_jump:delta_interval} ensures that RP$_{\varepsilon - \delta}$ and RP$_{\varepsilon + \delta}$ have the same solution for a small enough $\delta$,
		which can be interpreted as a local continuity of the optimal value of a chance-constrained program under small perturbation of the risk parameter.
		% \item The bound \eqref{lemma:optimality_hdist_claim} explicitly relates partition accuracy to the optimality gap, which is 0 if each set $\{\Dcalj\}_{j=1}^K$ is a singleton, in which case both PP$_\veps$ and RP$_\veps$ coincide with CP$_\veps$. Since the bound is proportional to the worst-case difference between the tightening and the relaxation, it is in principle possible to quantify a number $K$ of sets in the partition before solving the optimization problem to make the optimality gap sufficiently small. 
		\item Interestingly, \eqref{lemma:optimality_hdist_claim} highlights that the performance bound not only depends on the partitioning, but also on the geometry of the constraint set described by the functions $g_h, h\in\{1,...,Z\}$. Indeed, even for small tightening or relaxation, the optimality gap can still be large if the quantity $\sigma(C_h)$ is very small, which can be the case for sets with an ``imbalanced'' shape (i.e., very narrow in a certain direction). See Figure \ref{fig:hdist} for an example in which a small tightening can lead to a potentially large optimality gap. Computing $\sigma(C_h)$ is a handy tool to evaluate the geometry of multidimensional sets, guiding the choice of a larger $K$ if needed.
		\item In principle, \eqref{lemma:optimality_hdist_claim} allows to derive an interval that contains $J^\star_{\text{CP}_\veps}$ without solving the auxiliary problem RP$_{\varepsilon+\delta}$. Although this allows to save computational budget, it may be significantly more conservative, since the max in \eqref{lemma:optimality_hdist_claim} essentially tells that the optimality gap is obtained as the worst-case distance between each $\Fcal_{\text{PP}_\veps}^k$ and the corresponding $\Fcal_{\text{RP}_\veps}^k$.
	\end{enumerate}

	\section{Discussion on the results and practical implementation}\label{sec:discussions}
	In this section, we discuss the tractability of the proposed approach, with a particular emphasis on the robust constraint in PP$_\veps$. In addition, we provide details on how to implement it on a case study of interest, i.e.\ MPC for PWA systems, with a focus on deriving a partitioning for such application. 
	
	\subsection{Tractability of the proposed approach}
	Problem PP$_\veps$ contains a mixed-integer robust constraint in $\Fcal_{\text{PP}_{\veps}}$ as defined in \eqref{eq:feas_set_pp}.
	Such constraint can be solved by a branch-and-bound algorithm whenever the robust constraint 
	\begin{align}\label{eq:constr_recap}
		g_h(x,\theta)\leq 0, \forall\theta\in\Dcalj
	\end{align}
	admits an equivalent deterministic reformulation, for $ j\in\{1,...,K\}$ and $ h\in\{1,...,Z\}$. 
	In the following we fix, for simplicity, two indices $j$ and $h$. In several cases, an equivalent deterministic reformulation for \eqref{eq:constr_recap} exists, which is the case, e.g., when the function $g_h$ is affine in $\theta$ and the set $\Dcalj$, is compact and convex (see \cite{bertsimas2022robust}, Chapter 2 and 5). More generally, the robust constraint appearing in \eqref{eq:constr_recap} can be solved whenever the function $g_h$ is convex or monotone with respect to $\theta$, and $\Dcalj$ is a polytope. Then, according to Bauer's Maximum Principle \cite{bauer1958minimalstellen}, the maximum with respect to $\theta$ is attained at least at one of the vertices of the uncertainty set. Note that, although this induces an equivalent deterministic reformulation of \eqref{eq:constr_recap}, vertex enumeration can be computationally expensive for high-dimensional sets $\Dcalj$. In addition, there are some specific classes of nonlinear functions, e.g., concave with respect to the uncertainty, for which an equivalent tractable reformulation can be derived (see \cite{bertsimas2022robust}, Chapter 13). Last, we also mention the case in which $g_h$ is $L$-Lipschitz with respect to $\theta$, uniformly in $x$. Constraint tightening based on Lipschitz bounds is possible (see e.g.\ \cite{kohler2021computationally}), but these bounds are typically very conservative unless the size of the set $\Dcalj$ is small enough.
	
	Last, we observe that if only feasibility is of interest, the computational complexity of PP$_\veps$ can be reduced. Indeed, in $\Fcal_{\text{PP}_{\veps}}$, defined as in \eqref{eq:feas_set_pp}, we optimally choose for which of the sets in $\{\Dcalj\}_{j=1}^K$ the constraint must be robustly satisfied, which is a combinatorial problem. An alternative would be to select such sets greedily, e.g.,  by ordering $\{\tpj\}_{j=1}^K$ in decreasing order, selecting the first elements whose sum is at least $1-\veps$, and enforcing robust constraint satisfaction with respect to the corresponding uncertainty sets. This idea essentially fixes the value of the indicator function in $\Fcal_{\text{PP}_{\veps}}$ to some predefined values, thus it might significantly reduce the computational complexity, but it may yield a suboptimal solution. Note, however, that the logical ``or'' in $\Fcal_{\text{PP}_{\veps}}$ is still a combinatorial constraint, which, however, depends on the problem of interest (e.g., control of dynamical systems with logical constraints), rather than on the proposed approach.

	\subsection{Application: Optimal control of PWA systems}\label{subsec:pwa_details}
	The class of constraint functions described in \eqref{eq:g_linear} is often encountered in several optimization problems of practical interest, i.e., optimization problems with a class of logical constraints. In this section, we show how to implement our algorithm for MPC problems for discrete-time PWA systems with chance constraints. PWA systems are a special class of nonlinear system, where the function describing the dynamics is indeed piecewise affine in  state, input, and uncertainty. This class of systems includes linear systems, and they are often used to model physical systems with logical or discrete components (e.g., energy storage systems), or to approximate nonlinear systems with a user-defined level of accuracy \cite{gharavi2024efficient}.
	
	\subsubsection{Modeling}
	The dynamics of a general PWA system with additive uncertainty are described by
	\begin{align}\label{eq:pwa_sys}
		s_{t+1} = A_l s_t + B_l u_t + v_l + C_l \eta_t \ \Leftrightarrow \  
		\begin{bmatrix}
			s_t \\ u_t \\ \eta_t
		\end{bmatrix}
		\in\Omega_l,
	\end{align}
	where $A_l, B_l, C_l$, and $v_l$ are the system matrices and vectors of appropriate dimension, for $l = 1,...,m$, with $m\in\Z_{>0}$. Also, $s_t\in\R^{n_s}$ and $u_t\in\R^{n_u}$ are, respectively, the state and input of the system at time step $t$, and $\Omega_l$, $l = 1,...,m$, are non-empty polytopic sets such that $\{\Omega_l\}_{l=1}^m$ is a polytopic partition of the state, input, and uncertainty space.
	The variable $\eta_t\in\R^{n_\eta}$ is an exogenous additive stochastic signal, and it is described by a distribution inducing the probability measure $\P$.
	Given a stage cost function $\ell:\R^{n_s}\times\R^{n_u}\rightarrow\R_{\geq0}$ and a prediction horizon $\horizon\in\Z_{>0}$, the goal of the controller is to employ the system dynamics \eqref{eq:pwa_sys} to find a sequence of inputs $\us_{t},...,\us_{t+N_\text{pred}-1}$ such that the expected performance of the system over the prediction window 
	\begin{align*} %\label{socp:cost} 
		\E\textsubscript{$\eta_{t},...,\eta_{t+N_\text{pred}-1}$} \left[\sum_{k=1}^{N_\text{pred}} \ell(s_{t+k}, u_{t+k-1}) \right] 
	\end{align*}
	is minimized, while satisfying the following constraints on the state and on the input of the system:
	\begin{align*} %\label{socp:uc}
			& \Pr(s_{t+k}\in\Scal, \ k=1,...,N_\text{pred}) \geq 1-\varepsilon, 
			\\ & u_{t+k} \in \Ucal,  \ k=0,...,N_\text{pred}-1,
	\end{align*}
	where $\Scal\subset\R^{n_s}, \Ucal\subset\R^{n_u}$ are polytopes.
	
	Now, to match the constraint formulation in \eqref{eq:g_linear}, we introduce the following compact notation: $\ss_t = [s_t^\top \ ... \ s_{t+\horizon}^\top]^\top, \uu_t = [u_t^\top \ ... \ u_{t+\horizon-1}^\top]^\top, \eeta_t = [\eta_t^\top \ ... \ \eta_{t+\horizon-1}^\top]^\top$, and
	$$\FF_h = \begin{bmatrix}
		I \\ A_{h_0} \\ A_{h_1} A_{h_0} \\ \vdots \\ \Pi_{k=0}^{N_\text{pred}-1} A_{h_k} 
	\end{bmatrix},
	\ \ \vv_h = \begin{bmatrix}
		v_{h_0} \\ v_{h_1} \\\vdots \\ v_{h_{N_\text{pred}-1}}
	\end{bmatrix}
	$$
	$$
	\GG_h = \begin{bmatrix}
		0_{n_s\times n_u} & 0_{n_s\times n_u} & \ldots & 0_{n_s\times n_u}
		\\
		B_{h_0} & 0_{n_s\times n_u} & \ldots & 0_{n_s\times n_u}
		\\ A_{h_1} B_{h_0} & B_{h_1} & \ddots & \vdots
		\\ \vdots  & \ddots & \ddots & 0_{n_s \times n_u}
		\\ \Pi_{k=1}^{N_\text{pred}-1} A_{h_k} B_{h_0} & \Pi_{k=2}^{N_\text{pred}-1} A_{h_k} B_{h_1} & ... & B_{h_{\horizon-1}}
	\end{bmatrix}
	$$
	$$ \GGamma_h = \begin{bmatrix}
		0_{n_s\times n_{\eta}} & 0_{n_s\times n_{\eta}} & \ldots & 0_{n_s\times n_{\eta}}
		\\
		C_{h_0} & 0_{n_s\times n_{\eta}} & \ldots & 0_{n_s\times n_{\eta}}
		\\ A_{h_1}C_{h_0} & C_{h_1} & \ddots & \vdots
		\\ \vdots  & \ddots & \ddots & 0_{n_s\times n_{\eta}}
		\\ \Pi_{k=1}^{N_\text{pred}-1} A_{h_k} C_{h_0}  & \Pi_{k=2}^{N_\text{pred}-1} A_{h_k} C_{h_1} & ... & C_{h_{\horizon-1}}
	\end{bmatrix}$$
	with $h\in\{1,...,m^{N_\text{pred}-1}\}$.
	Then, the system dynamics \eqref{eq:pwa_sys} over the horizon are:
	\begin{align}\label{eq:compact_dyn_1} 
		\begin{split}
			&\ss_{t} =   \FF_h s_t + \GG_h \uu_t + \vv_h + \GGamma_h \eeta_t 
			\\& \Leftrightarrow s_t\in\Omega_{h_0}, ..., s_{t+N_\text{pred}-1}\in\Omega_{h_{N_\text{pred}-1}}.
		\end{split}
	\end{align}
	Let $\LL_u, \boldsymbol{l}_u$ describe the polytopic representation of $\Ucal^{N_\text{pred}}$, i.e., $(u_t, ..., u_{t+N_\text{pred}-1}) \in \Ucal^{N_\text{pred}} \Leftrightarrow \LL_u\uu_t\leq\boldsymbol{l}_u$, and similarly let $\LL_{s, h}, \boldsymbol{l}_{s, h}$ describe the polytopic representation of $(\Omega_{h_0}\cap\Scal) \times ... \times (\Omega_{h_{N_\text{pred}-1}}\cap\Scal)$, the final chance constrained optimal control problem can be written as
	\begin{align}\label{ocp}
		\begin{split}
			\min_{\uu_t} \ &  \E \left[\Lscr(s_t, \uu_t, \eeta_t)\right]
			\\ \text{s.t.} \ & \Pr(\lor_{h=1}^Z (\LL_{s, h} \ss_{t} \leq \boldsymbol{l}_{s, h})) \geq 1-\veps 
			\\& \LL_u\uu_t\leq\boldsymbol{l}_u,
		\end{split}
	\end{align}
	with $\Lscr(s_t, \uu_t, \eeta_t) :=\sum_{k=1}^{\horizon} \ell(s_{t+k}(s_t, \uu_t, \eeta_t), u_{t+k-1})$, and where $s_{t+k}(s_t, \uu_t, \eeta_t)$ indicates that the predicted state $s_{t+k}$ is a function of the current state, the applied input sequence, and the uncertainty, according to \eqref{eq:compact_dyn_1} (thus, by explicitly substituting the PWA dynamics in the cost function).
	Note that \eqref{ocp} matches the formulation of CP$_\veps$ with $x=\uu_t, \Xcal=\{\uu: \LL_u\uu\leq\boldsymbol{l}_u\}, \theta=\eeta_t, Z=M^{N_\text{pred}-1}, J(x, \theta)=\Lscr(s_t, \uu_t, \eeta_t)$, and $g_h(x, \theta) = \LL_{s, h} \ss_{t} - \boldsymbol{l}_{s, h}$, for $h\in\{1,...,Z\}$. In particular, each function $g_h$ is linear, thus matching the special case considered in \eqref{eq:g_linear}. However, despite the linearity of $g_h$, optimal control problems for PWA systems are non-convex also in the deterministic case (due to the ``or'' constraint, which is intrinsic of the PWA dynamics), and typically they are formulated as mixed-integer problems \cite{bemporad1999control, heemels2001equivalence}.

	\subsubsection{Constraint modifications and Lipschitz constants}
	
		In order to apply the proposed approach to MPC problems for PWA systems, we need some ingredients. The tightened and robust constraints are of the type discussed in \eqref{eq:robust_linear}, \eqref{eq:relaxed_linear}, and the tightening and relaxation parameters can be computed as in \eqref{eq:tight_linear}, \eqref{eq:gamma_linear}. 
		Note that, in the case of PWA systems, in the resulting problem PP$_{\veps-\delta}$ we require that all the uncertainty values in a set $\Dcalj$ activate the same mode sequence (since a given sequence of control and uncertainty uniquely determines the mode sequence). Thus, we may require the sets in the uncertainty partition to have non-overlapping boundaries, to allow that different mode sequences can be activated by sets whose boundaries may have non-empty intersections. This, in practical implementation, can be achieved by subtracting an arbitrarily small positive term to \eqref{eq:tight_linear} (and adding it to \eqref{eq:gamma_linear}).
		
		For the performance bounds, the Lipschitz constants of the PWA dynamics with respect to $\uu_t$ and $\eeta_t$ are needed. Note that generic vector-valued continuous PWA functions (thus also the ones in \eqref{eq:compact_dyn_1}) are automatically Lipschitz, with constant equal to the largest induced matrix norm defining the slope (this is evident in the scalar case, and can be derived by using the triangle inequality for the multi-dimensional case). Then, if the stage cost is also Lipschitz, the function $\Lscr$ will be Lipschitz due to composition of Lipschitz functions. 
		
		For example, suppose the stage cost is the 1-norm, which is commonly used for control of PWA systems (see e.g.\ \cite{he2024approximatea}, as the resulting problem can be then solved as a mixed-integer linear program). Thus, by considering $Q\in\R^{n_s\times n_s}$ and $R\in\R^{n_u\times n_u}$ with non-negative entries, the stage cost is 
		$$\ell(s,u) = \|Qs\|_1 + \|Ru\|_1.$$
		Since any norm is 1-Lipschitz (in view of the reverse triangle inequality), the Lipschitz constants of the cost function with respect to $\eeta_t$ and with respect to $\uu_t$ are, respectively
		\begin{align}\label{eq:lip_pwa}
			\begin{split}
				&L_\eta = \|Q\|_1\max_{h\in\{1,...,Z\}} \|\GGamma_h\|_1, 
				\\&  L_u = \left(\|Q\|_1\max_{h\in\{1,...,Z\}}\|\GG_h\|_1 + \|R\|_1 \right)\sqrt{\horizon n_u},
			\end{split}
		\end{align}
		where we have leveraged the composition of the PWA dynamics with the 1-norm, i.e.: $\|Q\|_1$ is the Lipschitz constant of the map $s\rightarrow\|Qs\|_1$, $\|R\|_1$ is the Lipschitz constant of the map $u\rightarrow\|Ru\|_1$, and $\max_{h\in\{1,...,Z\}}\|\GGamma_h\|_1$ and $ \max_{h\in\{1,...,Z\}}\|\GG_h\|_1$ are the Lipschitz constants of the PWA dynamics \eqref{eq:compact_dyn_1} with respect $\eeta_t$ and $\uu_t$, respectively. Last, $L_u$ is rescaled in accordance to the equivalence between 1-norm and 2-norm in view of Assumption \ref{ass:Jlip_1}.

	\subsubsection{Uncertainty partitioning}
	
	So far, a partition that satisfies Assumption \ref{ass:partition} was assumed to be given. We now discuss three options to derive such a partition. 
	
	\paragraph{Clustering}\label{par:clus}
	A partition of $\Dcal$ may be derived from a clustering algorithm, followed, e.g., by a Voronoi partition \cite{fortune2017voronoi} based on the resulting centroids. This method has the benefit of capturing a pattern in the uncertainty (as explored in \cite{cordiano2024scenario}), but it requires additional data to perform the clustering algorithm. Also, modern implementations for clustering and Voronoi partitioning allow for iterative algorithms with polynomial complexity, and in practice can be very fast. 
	
	\paragraph{Iterative gridding}\label{par:grid}
	Second, if no data to perform clustering is available, or if the partitioning has to be computed fast, an idea could be to just partition the uncertainty domain such that the sets $\{\Dcalj\}_{j=1}^K$ are with a convenient shape, e.g.\ box uncertainty sets. 
	That is, assume an initial box that contains the domain $\Theta$ is available: $\Dcal:=\{\theta\in\R^{n_\theta}: \underline{\rho}_i \leq \theta_i \leq \Bar{\rho}_i, \forall i\in\{1,...,n_\theta\} \}$. Then we can find the entry $i$ for which the interval $[\underline{\rho}_i, \Bar{\rho}_i ]$ is the largest, and split it into half. This procedure can be repeated iteratively, by splitting into half the largest interval among the sets currently present in the partition, until a maximum number $K$ is reached.
	
	\paragraph{Adaptive splitting}\label{par:ada}
	Third, a variant of the previous idea can be tailored based on the receding-horizon nature of MPC. Note that instead of considering all the entries of the uncertainty in this gridding procedure, an interesting variant consists of partitioning the dimensions of $\eeta_t$ that may push the state of the system close to the constraint boundaries. For example, from the predicted state trajectories at time step $t$ we can infer at which time step $\Bar{k}\in\{1,...,\horizon\}$ the predicted state $x_{t+\Bar{k}}$ is closest to the constraint boundaries. Then the idea could be to derive a partition for $\eta_{t+\Bar{k}}$ only (e.g.\ by using the idea in Section \ref{par:grid} by starting from a box containing $\eta_{t+\Bar{k}}$) for the solution of the MPC problem at the next time step, with the intuition that having a refined partition on the uncertainty elements that may cause a constraint violation is likely to lead significant performance improvements (since high system performance is often achieved in the vicinity of the constraint boundaries).

	\section{Numerical example}\label{sec:example}
	
	\subsection{Setup}
	The simulation results in this section are based on the following PWA system: 
	\begin{equation}\label{eq:pwa_example}
		\begin{gathered}
			A_1 = \begin{bmatrix}
				0.8 & 1 & 1 \\
				0 & 0.9 & 1 \\
				0 & 0 & 0.2
			\end{bmatrix}, \ 
			A_2 = \begin{bmatrix}
				0.8 & 1 & 1 \\
				0 & -0.9 & 1 \\
				0 & 0 & -0.2
			\end{bmatrix} \\
			A_3 = \begin{bmatrix}
				0.8 & 1 & 1 \\
				0 & 0.5 & 1 \\
				0 & 0 & 0.5
			\end{bmatrix}, 
			B = \begin{bmatrix}
				0 \\
				0 \\
				1
			\end{bmatrix}, \ 
			C = \begin{bmatrix}
				0 & 0 \\
				1 & 0 \\
				0 & 1
			\end{bmatrix},
		\end{gathered}
	\end{equation}
	with regions defined by
	\begin{equation*}
		\begin{gathered}
			\Omega_1 = \{s\in\R^3: s_1 \leq -1\}, \ \Omega_2 = \{s\in\R^3: -1 \leq s_1 \leq 1 \},
			\\ \Omega_1 = \{s\in\R^3: s_1 \geq 1\}.
		\end{gathered}
	\end{equation*}
	Note that the PWA dynamics described by \eqref{eq:pwa_sys} and \eqref{eq:pwa_example} are a continuous function of $s_t, u_t, \eta_t$, by simply choosing $v_l = 0, l\in\{1, 2, 3\}$. In all the MPC problems that we are going to solve, we consider an initial state $x_0=[1.5 \ 2 \ 1]^\top$, $\horizon=5$, $Q=2I_3$ (where $I_3$ is the 3-dimensional identity matrix), $R=1$, and we require that the system satisfies the following constraints:
	\begin{align*}
		& \|u_{t+k}\|_\infty \leq 0.7, k=0,...,\horizon-1,
		\\& \Pr(|s_{3, t+k}| \leq 0.7, k=1,...,\horizon) \geq 1-\veps,
	\end{align*}
	with $\veps = 0.15$. At each time step $t\in\Z_{\geq0}$, a forecast for the values of the two-dimensional disturbance is available from the following model:
	\begin{align}\label{eq:unc}
		\eta_{t+k,i} = a_{k, i} \sin(\omega_i k + \phi_i) + w_{k, i}
	\end{align}
	with $i\in\{1,2\}$ and $k\in\{0,...,\horizon-1\}$.
	We consider that the given source of uncertainty exhibits more variability as the time grows, over the prediction window. Thus we consider $a_{k,1}$ and $a_{k,2}$ to be uniformly distributed, respectively over the intervals $[0.02(k+1), 0.03(k+1)], [0.04(k+1), 0.06(k+1)], k=0, ..., N-1$. Similarly, $w_{k, i}$ is uniformly distributed over  $[-0.03(k+1), 0.03(k+1)], k\in\{0, ..., N-1\}$, $i\in\{1,2\}$.
	Then, $\omega_1$ and $\omega_2$ follow a mixture of Gaussian distributions with means in the sets $\{0.05, 0.12, 0.3, 0.5, 0.75\}$ and $\{0.1, 0.24, 0.6, 1, 1.5\}$ respectively, variances equal to 0.01, and each component has weight $\{0.05, 0.1, 0.4, 0.4, 0.05\}$ for both $i=1,2$. Last, $\phi_1$ and $\phi_2$ are uniformly distributed in $[-0.1, 0.1]$.
	For the following simulations, we initially assume the domain $\Omega$ of the distribution to be contained in a hyper-rectangle $\Dcal$, i.e.
	\begin{align}\label{eq:domain}
		\underline{\rho} \leq \eeta_{t} \leq \Bar{\rho}, \ \forall \eeta_t\in\Omega,
	\end{align}
	where the inequalities are intended element-wise, and $\underline{\rho}, \Bar{\rho}$ are known quantities given by
	\begin{equation*}
		\begin{gathered}
			\Bar{\rho} = [0.06 \ 0.09 \ 0.12 \ 0.18 \ 0.18 \ 0.27 \ 0.24 \ 0.36 \ 0.3 \ 0.45]^\top
			\\ \underline{\rho} = -\Bar{\rho}.
		\end{gathered}
	\end{equation*}
	Next, we propose two main experiments to test the proposed approach. The first experiment validates our theoretical results in terms of feasibility and performance. In particular, we will evaluate how the computational complexity and the conservatism of the obtained solution can be traded off by adjusting $K$, and we will compare the obtained results with randomized algorithms \cite{campi2008exact, mohajerinesfahani2015performance}. In the second experiment, we test the proposed approach in a closed-loop MPC problem, by comparing two different approaches for partitioning.
	For all the simulations, we use Gurobi 10.0.2 as a solver on a Macbook Pro (Apple Silicon M1 pro, 32GB RAM)\footnote{Code available at: https://github.com/fracordi/partition-control-pwa}. % 3.2 GHz
	% CPU Intel i7-1185G7 @ 3.00GHz.

	\subsection{Open-loop validation}
	In the first simulation of this experiment we validate our theoretical findings. In particular, concerning feasibility we evaluate how the observed constraint violation changes for a growing sample size, while concerning performance we will test how the performance bounds computed by means of Theorem \ref{th:optimal_values} change for a growing $K$.
	
	Figure \ref{fig:confidence} shows the feasibility results for a growing sample size and different choices for $K, \delta$.  More precisely, for each value of $N$ on the $x$-axis, and for a certain choice $(K,\delta)\in\{(5,0.1), (20, 0.05), (100, 0.01)\}$, we partition the uncertainty domain \eqref{eq:domain} in $K$ sets using the gridding method described in Section \ref{par:grid}, and we compute $\{\hthetaj\}_{j=1}^K, \{\hpj\}_{j=1}^K$, respectively, as in \eqref{eq:hthetaj}, \eqref{eq:hpj}. We solve the partition-based
	problem derived from \eqref{ocp}, we apply the resulting input sequence to the system, and measure the constraint violation using a validation set (of size sufficiently large for a reliable estimate, i.e.\ $N_\text{sample}=10^5$). The whole procedure is then repeated 500 times (for each value of $N, K, \delta$), in order to compute the mean value of the expected constraint violation (solid line) and a confidence interval of one standard deviation (shaded area).

	\begin{figure}
		\centering
		\includegraphics[scale=0.52]{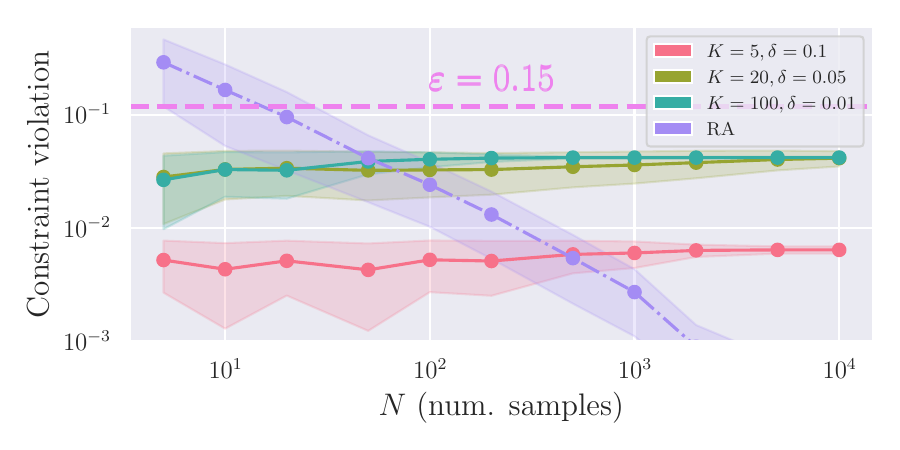}
		\caption{Empirical constraint violation for different choices of $K,\delta$, and compared with randomized algorithms (RA). For each case, the bold line is the mean value, whereas the shaded area is a confidence interval of one standard deviation, both computed empirically over 500 simulations.}
		\label{fig:confidence}
	\end{figure}

	We observe that the results confirm the theoretical findings, i.e., for $N$ large enough, PP$_{\veps-\delta}$ is guaranteed to provide a feasible solution for CP$_\veps$. In addition, for larger $K$ or smaller $\delta$, the observed constraint violation approaches the theoretical value $\veps=0.15$, i.e., the solved problem becomes less conservative (since in such cases PP$_{\veps-\delta}$ better approximates the original CP$_\veps$), while preserving feasibility. However, with a given $N$, we also observe more variability in the measured violation for larger values of $K$, 
	or smaller values for $\delta$, compared to $(K,\delta)=(5, 0.1)$ (note the logarithmic scale on the $y$-axis). This is indeed expected since, in view of Lemma \ref{lemma:uniform}, the confidence of the accuracy in the estimation of $\{\hpj\}_{j=1}^K$ might decrease for large $K$ or small $\delta$.
	However, in this specific case, selecting $K$ very high or $\delta$ very small may not necessarily give significant improvements in the performance, compared to $K=20, \delta=0.05$.

	The approach is then compared with randomized algorithms \cite{campi2008exact, mohajerinesfahani2015performance}, denoted by RA, where $N$ sampled constraints are employed in the optimization problem in place of the chance constraint. Although more sample efficient, RA can be more conservative, in the sense that the observed constraint violation can be significantly smaller than $\varepsilon$ (in Figure \ref{fig:confidence} the constraint violation is smaller than 0.001 for $N\geq2000$, and larger values of $N$ are not tested in RA due to an excessive computation time).
	Indeed, in RA the $N$ samples originate $N$ distinct hard constraints, which certainly reduce the risk of constraint violation, but this comes at the expense of an excessive conservatism. In contrast, in our approach, the estimate  $\{\hpj\}_{j=1}^K$ becomes more and more accurate with growing $N$, without introducing conservatism in the system performance.
	
	Table \ref{table:performance} shows the performance bounds computed by solving PP$_{\veps-\delta}$ and RP$_{\veps+\delta}$, and by applying Theorem \ref{th:optimal_values}, where the computation of the Lipschitz constants \eqref{eq:lip_pwa} yields
	$$L_\eta = 34.64, \ L_u = 79.69.$$
	Together with the performance bounds, also the total solver time, to solve both PP$_{\veps-\delta}$ and RP$_{\veps+\delta}$, are reported in Table \ref{table:performance}. This is done for $(K,\delta)\in\{5,10,20,50,100,200\}\times\{0.05, 0.01\}$, and for each choice of $(K,\delta)$, $N$ is computed according to Assumption \ref{ass:uniform:M}, with $\beta=10^{-4}$. As in the previous simulation, each problem is solved 500 times, and the average of the obtained bounds and of the solver time is listed in Table \ref{table:performance}. As expected, it is possible to note that by increasing $K$ the bounds become indeed less conservative, since this would decrease the error due to the partitioning, i.e.\ the term $c_2$ in \eqref{eq:c1,c2}. The same happens when decreasing $\delta$, which is also expected since, according to Assumption \ref{ass:uniform:M}, this requires more samples, thus decreasing the sampling error $c_1$ in \eqref{eq:c1,c2}. 
	
	Note, however, that the level of conservatism of these bounds is problem-specific, and they are largely affected by the dependency on the Lipschitz constants $L_u, L_\eta$ of the system. The considered system is PWA, thus nonlinear, and the Lipschitz constants computed with \eqref{eq:lip_pwa} are conservative. An alternative would be to compute them via gridding or sampling. Although this would result in an approximation, the obtained constants are
	$L_{\eta,\text{approx}} \approx 17, \ L_{u,\text{approx}} \approx 60$,
	thus much less conservative than the theoretical ones computed above. Using such approximated constants, the obtained bounds with, e.g., $K=20, \delta=0.05$, lead to 
	$$J^\star_{\text{CP}_{\veps}} \in [54, 89], $$
	which are less conservative compared to the bounds in Table \ref{table:performance}. In Table \ref{table:performance} we also show the total time required to solve both PP$_{\veps-\delta}$ and RP$_{\veps+\delta}$. This grows indeed with $K$, but the computational burden remains affordable for $K=5-20$, for systems with a large enough sampling time, e.g., building heating systems \cite{pippia2021scenariobased}. Note that, in general, PWA systems are computationally demanding even in the nominal case, and, with the current state-of-the-art solvers, the computational complexity can grow exponentially with the prediction horizon. In general, if theoretical performance bounds are not of interest, but only a feasible solution is desired, gradient-free methods are viable options to provide solutions to PP$_\veps$, although potentially suboptimal, without resorting to mixed-integer reformulations.

	\begin{table}
		\begin{tabular}{|c|ccc|ccc|}
			\hline
			& \multicolumn{3}{c|}{$\delta=0.05$} 
			& \multicolumn{3}{c|}{$\delta=0.01$} \\ \cline{2-7}
			$K$ & LB    & UB     & Time tot.   &  LB    & UB      & Time tot. \\ \hline
			5   & 35.7  & 109.9    &    2.2        &  47.1  &  98.5   &  2.3 \\
			10  & 38.9  &  106.8 &    5.2     &  48.9  &  96.8   &  5.4 \\
			20  & 41.3  & 102.7  &    12.2    &  49.7  &  94.2   &  12.6 \\
			50  &  45.5 & 98.6   &    29.4     &  51.6  &  92.4   &  31.6 \\
			100 &  48.4 &  95.7  &    54.2     &  52.9  &  91.1   &   58.7\\
			200 & 50.2  & 93.9   &    126.5    &  54.2  &  90.6   &   146.9\\ 
			\hline
		\end{tabular}
		\caption{Avg. lower and upper performance bounds (LB, UB) and total solver time (in seconds), for different choices of $K, \delta$.}
		\label{table:performance}
	\end{table}

	\subsection{Closed-loop experiment}
	In this section, we compare two different partitioning strategies in a closed-loop experiment, where, at each time step $t\in\{0,...,T_{\text{cl}}\}$, we solve the partition-based problem PP$_{\veps-\delta}$ for system \eqref{eq:pwa_example}. Specifically, we compare the
	partitioning strategy defined in Sections \ref{par:clus}, i.e., $K$-means clustering, and \ref{par:ada}, i.e., adaptive splitting. In this experiment, we evaluate the closed-loop cost
	$$\ell_{\text{cl}}(t):=\ell(x_t, u_{t-1}) = \|Qx_t\|_1+\|Ru_{t-1}\|_1,$$
	$\forall t\in\{1,...,T_\text{cl}\}$, with $T_\text{cl}=80$, which is computed by repeating this closed-loop experiment 500 times, in order to compute the mean value and a confidence interval of one standard deviation. Note that when the average stage cost is very small, subtracting one standard deviation may lead to a negative value in the plot, which corresponds to $-\infty$ in a logarithmic scale. Thus we limit the $y$-axis to 0.1. In particular, Figure \ref{fig:cl_loop} shows the results for the two partitioning options (where ``ADPT'' denotes the approach discussed in Section \ref{par:ada}, and ``KMNS'' the one in Section \ref{par:clus}), and for $K\in\{4,8\}$. The confidence parameters are set to $\delta=0.05, \beta=10^{-4}$, and $N$ samples are drawn from \eqref{eq:unc} according to Assumption \ref{ass:uniform:M}.
		
	\begin{figure}
		\centering
		\includegraphics[scale=0.55]{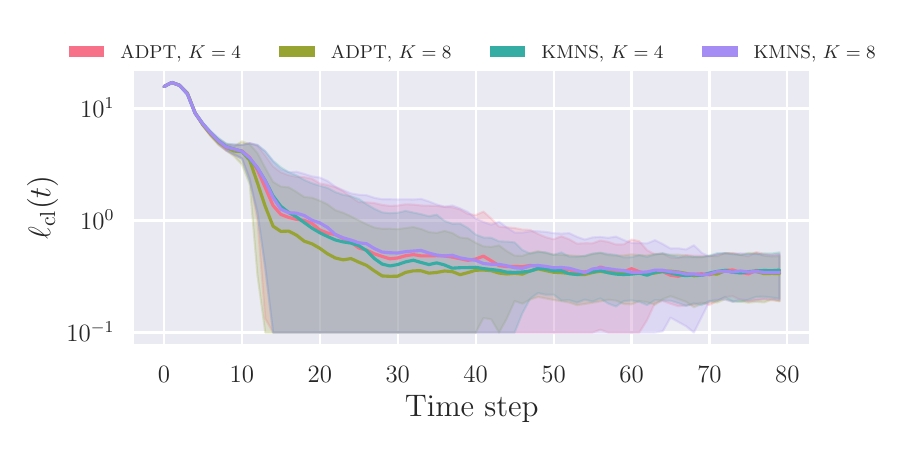}
		\caption{Closed-loop cost for the adaptive splitting (ADPT), and $K$-means (KMNS) with Voronoi partitioning. In both cases we test $K=4$ and $K=8$. For each case, the bold line is the mean value, whereas the shaded area is a confidence interval of one standard deviation, both computed empirically over 500 simulations.}
		\label{fig:cl_loop}
	\end{figure}
	
	As we can see in Figure \ref{fig:cl_loop}, the choice $K=4$ leads to substantially similar performance for both ADPT and KMNS. However, increasing the number of clusters to $K=8$ does not significantly alter the performance for KMNS; in contrast, ADPT exhibits significantly better performance, both in terms of speed of convergence, and in terms of variability of the closed-loop cost.
	This may indicate that, in control applications, having a partition for the uncertainty regions that may cause a constraint violation is likely to lead to significant performance improvements, since high system performance is often achieved in the vicinity of the constraint boundaries (which is the idea behind ADPT). Conversely, clustering-based partitions, i.e., KMNS, have the main feature of detecting a pattern in the data, but the effectiveness of the algorithm highly depends on the chosen $K$, and they do not take into account the effect of the uncertainty on the optimization problem. Last, the time required by the two algorithms to perform the partition in, on average over the 500 experiment and over time, is, in milliseconds: 0.25 (ADPT, $K=4$), 0.48 (ADPT, $K=8$),  3.5 (KMNS, $K=4$), 4.7 (KMNS, $K=8$). ADPT is faster than KMNS, since clustering algorithms typically need several iterations to converge. However, we see that they can be both considered viable options for online control algorithms, as the required partitioning time is negligible compared to the solver time (1-3 seconds depending on $K$).

	\section{Conclusions}\label{sec:conc}
	This paper has presented a new strategy for solving chance-constrained optimization problems. The proposed approach is based on partitioning the uncertainty domain in a user-defined number of sets, allowing for a trade off between conservatism and computational complexity. Rigorous feasibility and performance analyses have been provided, and the results have been tailored for a class of optimization problems that includes optimal control of dynamical systems with probabilistic logical constraints. Finally, the effectiveness of the approach has been tested on a model predictive control problem for PWA systems. 
	
	Ongoing and future work consists of making the approach more sample-efficient, as well as exploring methods to speed up the solution of MPC problems for chance-constrained PWA systems.

	\section*{Appendix}

	\subsection{Proof of Lemma \ref{lemma:uniform} }
	Lemma \ref{lemma:uniform} follows by observing that, under Assumption \ref{ass:partition}, the variable 
	$Y^{(j)} = \sum_{i=1}^M \mathbf{1}_{\thetai\in\Dcalj}$
	follows a multinomial distribution with parameter $\pj$, $\forall j\in\{1,...,K\}$. Then the result follows by applying the Bretagnolle–Huber–Carol inequality (\cite{vandervaart2013weak} Section A.6.6), on the random vector $[Y^{(1)}\ ...\ Y^{(K)}]^\top$, which, under Assumptions \ref{ass:iid} and \ref{ass:uniform:M}, yields:
	$$\P^N \left(\sum_{j=1}^K |Y^{(j)} - N \pj| \geq 2\sqrt{N} \delta\right) \leq 2^K e^{-2\delta^2},$$
	and, in view of the Scheffe's identity \cite{devroye2001combinatorial}, it leads to:
	$$\P^N \left(\max_{\Jcal\subseteq\{1,...,K\}} \left|\sum_{j\in\Jcal}^K \left(\hpj - \pj\right) \right| \geq \frac{\delta}{\sqrt{N}} \right) \leq 2^K e^{-2\delta^2},$$
	which is equivalent to the claim.

	\subsection{Proof of Lemma \ref{lemma:inclusion} }
	In this proof, we consider, for completeness, the general case in which the constraint functions $g_h$ are multidimensional, i.e.: $g_h:\Xcal\times\Dcal\rightarrow\R^{n_{\text{c},h}} $ (i.e., $g_h$ represents a number $n_{\text{c},h}$ of constraints that have to be satisfied). Also the functions $\gammaj_h$ are defined accordingly.
	Let, for short
	$\pj := \Pr(\theta\in\Dcalj).$
	From the law of total probability, and from Assumption \ref{ass:partition}, we have
	\begin{align}\label{app:inclusion:feas}
		\begin{split}
			& x\in\Fcal_{\text{CP}_\veps} 
			\\ & \Leftrightarrow \Pr(g(x,\theta)\leq 0)\geq 1-\veps
			\\& \Leftrightarrow \sum_{j=1}^{K} \pj \Pr(\lor_{h=1}^Z  g_h(x,\theta)\leq 0|\theta\in\Dcalj) \geq 1-\veps
		\end{split}
	\end{align}
	where the last one is in view of \eqref{eq:cc}.
	Now, we claim that the following relation holds for any $x\in\Xcal$:
	\begin{align}
		\begin{split}\label{app:inclusion:bound}
			&\Pr(\lor_{h=1}^Z  g_h(x,\theta)\leq 0|\theta\in\Dcalj) 
			\\& \leq \mathbf{1}_{[\lor_{h=1}^Z( g_h(x,\hthetaj)\leq \gammaj_h(x,\hthetaj))]}, 
		\end{split}
	\end{align}
	for all $j\in\{1,...,K\}$. If the value of the indicator function in the right-hand side of \eqref{app:inclusion:bound} is 1, then the relation is trivial. Thus, in the following we consider any $\Bar{x}\in\Xcal$ such that value of the indicator function is 0. This is equivalent to
	\begin{align*}
		g_h(\Bar{x},\hthetaj) & \not\leq \gammaj_h(\Bar{x},\hthetaj), \ \ \forall h\in\{1,...,Z\},
	\end{align*}
	which, in view of \eqref{eq:gamma}, implies that for all $h\in\{1,...,Z\}$ there exists an entry $l\in\{1,...,n_{\text{c}, h}\}$ such that we have
	\begin{align*}
		g_{h,l}(\Bar{x},\hthetaj) & > \gammaj_{h, l}(\Bar{x},\hthetaj)
		\\&  > g_{h,l}(\Bar{x},\hthetaj) - g_{h,l}(\Bar{x},\theta),  \forall \theta\in\Dcalj,
	\end{align*}
	which allows to say that for all $h\in\{1,...,Z\}$ there exists $l\in\{1,...,n_{\text{c}, h}\}$ such that
	$ g_{h,l}(\Bar{x},\theta) > 0,  \forall \theta\in\Dcalj.$
	This can be equivalently written as:
	$$\Pr(\exists h\in\{1,...,Z\}: g_h(\Bar{x},\theta)\leq 0|\theta\in\Dcalj) = 0,$$
	which proves \eqref{app:inclusion:bound}.
	Now, the result follows from similar steps in the proof of Theorem \ref{th:feas_prob}. Under Assumptions \ref{ass:iid} and \ref{ass:uniform:M}, the following hold true uniformly for all $x\in\Xcal$, with probability at least $1-\beta$:
	\begin{align*}
		1-\veps &\leq \sum_{j=1}^{K} \pj\Pr((\lor_{h=1}^Z  g_h(x,\theta)\leq 0)|\theta\in\Dcalj)
		\\& \leq \sum_{j=1}^{K} \pj\mathbf{1}_{[\lor_{h=1}^Z (g_h(x,\hthetaj)\leq \gammaj_h(x,\hthetaj))]}
		\\& \leq \sum_{j=1}^{K} \hpj\mathbf{1}_{[\lor_{h=1}^Z (g_h(x,\hthetaj)\leq \gammaj_h(x,\hthetaj))]} + \delta,
	\end{align*}
	where the second inequality follows by substituting \eqref{app:inclusion:bound} in \eqref{app:inclusion:feas}, and the third one from Lemma \ref{lemma:uniform}.
	This means that the event
	$$\max_{\Jcal\subseteq\{1,...,K\}}
	\left| \sum_{j\in\Jcal} \left(\Pr(\theta\in\Dcalj) - \hpj  \right) \right|
	\leq\delta$$
	implies both the following events:
	\begin{align*}
		& \Pr(g(x,\theta)\leq0) \geq 1-\veps, \ \forall x\in\Fcal_{\text{PP}_{\veps-\delta}}
		\\&	\sum_{j=1}^{K} \hpj\mathbf{1}_{[\lor_{h=1}^Z (g_h(x,\hthetaj)\leq \gammaj_h(x,\hthetaj))]} \geq 1-\veps - \delta, \  
		\forall x\in\Fcal_{\text{CP}_{\veps}},
	\end{align*}
	where the first one comes from the proof of Theorem \ref{th:feas_prob}. Equivalently, this means that
	\begin{align*}
		& \P^N (\Fcal_{\text{PP}_{\veps-\delta}} \subseteq \Fcal_{\text{CP}_{\veps}} \subseteq \Fcal_{\text{RP}_{\veps+\delta}}) 
		\\& \geq \P^N \left(\max_{\Jcal\subseteq\{1,...,K\}}
		\left| \sum_{j\in\Jcal} \left(\Pr(\theta\in\Dcalj) - \hpj  \right) \right|
		\leq\delta\right)
		\\& \geq 1-\beta.
	\end{align*}

	\subsection{Proof of Lemma \ref{lemma:uniform_concentration} }
	Let $R$ be the diameter of $\Xcal$ (i.e., the largest Euclidean distance between two points in $\Xcal$), and der $r\in(0,R]$. Let $N_\text{cover} := \Ncal(r, \Xcal, \|\cdot\|_2)$ be the covering number of $\Xcal$, i.e., minimum number of balls with radius $r$ required to cover a set $\Xcal$.
	Let $\{\Bcal^{(k)}\}_{k=1}^{N_\text{cover}}$ be such balls and $\{\txk\}_{k=1}^{N_\text{cover}}$ be their centers. This means, that for all $x\in\Xcal$, there exists a $k\in\{1,...,N_\text{cover}\}$ such that $\|x-\txk\|_2\leq r$. Then, it holds that
	\begin{align*}
		\begin{split}
			&\max_{x\in\Xcal} \Big|\E [J(x,\theta)] - \frac{1}{N}\sum_{i=1}^N  J(x,\thetai)\Big|
			\\& \leq \max_{k\in\{1,...,N_\text{cover}\}} \max_{x\in\Bcal^{(k)}} \Big|\E [J(x,\theta)] - \frac{1}{N}\sum_{i=1}^N J(x,\thetai)\Big|.
		\end{split}
	\end{align*}
	Thus, for any $k\in\{1,...,N_\text{cover}\}$, and for all $x\in\Bcal^{(k)}$, from the triangle inequality we have 
	\begin{align}\label{proof:optimal_values:monotone}
		\begin{split}
			& \Big|\E [J(x,\theta)] - \frac{1}{N}\sum_{i=1}^N J(x,\thetai)\Big|
			\\  & \leq
			\begin{aligned}[t]
				& \Big|\E [J(x,\theta)] - \E [J(\txk,\theta)]\Big|
				\\ & + \Big|\E [J(\txk,\theta)] - \frac{1}{N}\sum_{i=1}^N J(\txk,\thetai) \Big|
				\\ & + \Big| \frac{1}{N}\sum_{i=1}^N J(\txk,\thetai) - \frac{1}{N}\sum_{i=1}^N J(x,\thetai) \Big|.
			\end{aligned} 
		\end{split}
	\end{align}
	
	For the first term in \eqref{proof:optimal_values:monotone}, we have, by using Jensen's inequality (note that the absolute value is a convex function), the monotonicity of the expectation, and the Lipschitz continuity of $J$ with respect to $x$:
	\begin{align*}
		\Big|\E [J(x,\theta)] - \E[J(\txk,\theta)]\Big| &
		\leq \E \big[|J(x,\theta) - J(\txk,\theta)|\big]
		\\ & \leq L_x \E \big[\|x-\txk\|_2\big]  
		\leq L_x r,
	\end{align*}
	and the same bound can be found for the third term in \eqref{proof:optimal_values:monotone} in the same way. Concerning the second term, since $\{\txk\}_{k=1}^{N_\text{cover}}$ are fixed, we can apply McDiarmid's inequality \cite{devroye2001combinatorial} and a union bound. More specifically, for each $\{\txk\}_{k=1}^{N_\text{cover}}$, consider the function $F_k$ defined by
		$$F_k(\theta^{(1)}, ..., \theta^{(N)}):= \sum_{i=1}^N J(\txk, \thetai).$$
		In view of the Lipschitz continuity of $J$ (Assumption \ref{ass:Jlip_1}), each function $F_k$ satisfies the so-called bounded difference property, i.e.
		\begin{align*}
			& |F_k(\theta^{(1)}, ..., \thetai, ... \theta^{(N)}) - F_k(\theta^{(1)}, ..., \theta^{(m)}, ... \theta^{(N)})|
			\\& = |J(\txk, \thetai) - J(\txk, \theta^{(m)})|
			\\& \leq L_\theta D, \quad \forall \thetai, \theta^{(m)} \in \Dcal,
		\end{align*}
		with $D$ defined as in the statement.
		Thus, under Assumption \ref{ass:iid}, McDiarmid's inequality can be applied. For any $\lambda\geq0$:
		\begin{align*}
			&\P^N\left(|F_k(\theta^{(1)}, ..., \theta^{(N)}) - \E [F_k(\theta^{(1)}, ..., \theta^{(N)})] | \leq \lambda \right) 
			\\& = \P^N \left(\left|\sum_{i=1}^N J(\txk, \thetai) - N \E[J(\txk, \theta)]\right| \leq \lambda \right) 
			\\& \geq 1-2\exp\left(-\frac{2\lambda^2}{NL_\theta^2 D^2}\right),
		\end{align*}
		Since this holds for a fixed $\txk$, a union bound over all $\{\txk\}_{k=1}^{N_\text{cover}}$ ensures that
		\begin{align*}
			\max_{k\in\{1,...,N_\text{cover}\}} \left|\frac{1}{N}\sum_{i=1}^N J(\txk, \thetai) - \E [J(\txk, \theta)]\right| \leq \frac{\lambda}{N},
		\end{align*}
		holds with probability at least
		$1-2N_\text{cover}\exp\left(-\frac{2\lambda^2}{NL_\theta^2 D^2}\right)$. Last, from Lemma 6.27 in \cite{mohri2018foundations}, we have
		$N_\text{cover}=\Ncal(r, \Xcal, \|\cdot\|_2)\leq\left(\frac{3R}{r}\right)^n$. Then the claim directly follows by plugging the obtained bounds in \eqref{proof:optimal_values:monotone}, and by setting $\beta = N_\text{cover}\exp\left(-\frac{2\lambda^2}{NL_\theta^2 D^2}\right)$.

	\subsection{Proof of Lemma \ref{lemma:bound_hdist} }
	Let $x_\text{PP}$ and $x_\text{RP}$ be minimizers, respectively, of PP$_{\veps-\delta}$ and RP$_{\veps+\delta}$. Since these problems minimize the same objective function, and since $\Fcal_{\text{PP}_{\veps-\delta}}\subseteq\Fcal_{\text{RP}_{\veps+\delta}}$, we always have $0 \leq J^\star_{\text{PP}_{\veps-\delta}} - J^\star_{\text{RP}_{\veps+\delta}},$
	which is indeed the first inequality in \eqref{lemma:optimality_hdist_claim0}. Now, we analyze the case in which $x_\text{PP}\in\Fcal_{\text{PP}_{\veps-\delta}}$ and $x_\text{RP}\in\Fcal_{\text{RP}_{\veps+\delta}}\setminus\Fcal_{\text{PP}_{\veps-\delta}}$ (otherwise $x_\text{PP}$ and $x_\text{RP}$ must necessarily lead to the same optimal value, i.e.\ $J^\star_{\text{PP}_{\veps-\delta}} = J^\star_{\text{RP}_{\veps+\delta}}$).
	Since $\Fcal_{\text{PP}_{\veps-\delta}}\subseteq\Fcal_{\text{RP}_{\veps+\delta}}$, we have
	$$\mathbb{d}(\Fcal_{\text{RP}_{\veps+\delta}}, \Fcal_{\text{PP}_{\veps-\delta}}) = \inf \left\{\rho \geq 0 : \Fcal_{\text{RP}_{\veps+\delta}} \subseteq \Fcal_{\text{PP}_{\veps-\delta}} \oplus \rho \B \right\},$$
	thus, similarly as Example 7.62 in \cite{rockafellar2009variational}, there exists $\bx\in\Fcal_{\text{PP}_{\veps-\delta}}$ whose distance to $x_\text{RP}$ is not greater than $\mathbb{d}(\Fcal_{\text{RP}_{\veps+\delta}}, \Fcal_{\text{PP}_{\veps-\delta}})$, i.e.:
	\begin{align}\label{proof:optimality_hdist:xbar}
		\exists \bx\in\Fcal_{\text{PP}_{\veps-\delta}}: \|\bx-x_\text{RP}\|_2 \leq \mathbb{d}(\Fcal_{\text{RP}_{\veps+\delta}}, \Fcal_{\text{PP}_{\veps-\delta}}).
	\end{align}
	Then, we have:
	\begin{align*}
		J^\star_{\text{PP}_{\veps-\delta}} - J^\star_{\text{RP}_{\veps+\delta}} &= \sum_{j=1}^K \hpj J(x_\text{PP}, \hthetaj) - \sum_{j=1}^K \hpj J(x_\text{RP}, \hthetaj) 
		\\& \leq \sum_{j=1}^K \hpj J(\bx, \hthetaj) - \sum_{j=1}^K \hpj J(x_\text{RP}, \hthetaj) 
		\\& \leq L_x \|\bx - x_\text{RP}\|_2 
		\\& \leq L_x \mathbb{d}(\Fcal_{\text{RP}_{\veps+\delta}}, \Fcal_{\text{PP}_{\veps-\delta}}),
	\end{align*}
	where the first inequality follows since $\bx$ is potentially suboptimal for PP$_{\veps-\delta}$, the second from the Lipschitz continuity of $J$, and the third one from \eqref{proof:optimality_hdist:xbar}. This proves the claim.
	
	\subsection{Proof of Lemma \ref{lemma:Hdist_polytopes} }
	Since we have $\Fcal_{\text{PP}_\veps} \subseteq \Fcal_{\text{RP}_\veps},$
	from \eqref{proof:hdist_bound:tFcal}-\eqref{proof:hdist_bound:tF_RP_k} we have
	\begin{align*}
		\d(\Fcal_{\text{PP}_\veps}, \Fcal_{\text{RP}_\veps})  
		&= \min \{\rho \geq 0: \cup_{k=1}^M \Fcal_{\text{RP}_\veps}^k \subseteq (\cup_{k=1}^M \Fcal_{\text{PP}_\veps}^k) \oplus \rho \B\}
		\\&= \min \{\rho \geq 0: \cup_{k=1}^M \Fcal_{\text{RP}_\veps}^k \subseteq \cup_{k=1}^M (\Fcal_{\text{PP}_\veps}^k \oplus \rho \B)\}
	\end{align*}
	where we leveraged that the Minkowski sum is distributive with respect to the union \cite{velichova2016notes}.
	Now, let 
	\begin{align*}
		\Bar{\rho} = \max_{k\in\{1,...,M\}} \d(\Fcal_{\text{PP}_\veps}^k, \Fcal_{\text{RP}_\veps}^k).
	\end{align*}
	Such $\Bar{\rho}$ satisfies
	$\Fcal_{\text{RP}_\veps}^k \subseteq (\Fcal_{\text{PP}_\veps}^k \oplus \Bar{\rho} \B), \forall k\in\{1,...,M\},$
	which implies
	$$\cup_{k=1}^M \Fcal_{\text{RP}_\veps}^k \subseteq \cup_{k=1}^M (\Fcal_{\text{PP}_\veps}^k \oplus \Bar{\rho} \B) = (\cup_{k=1}^M \Fcal_{\text{PP}_\veps}^k) \oplus \Bar{\rho} \B, $$
	thus, from the definition of Hausdorff distance:
	\begin{align}\label{proof:hdist_bound:dist_for_union}
		\d(\Fcal_{\text{PP}_\veps}, \Fcal_{\text{RP}_\veps}) \leq \Bar{\rho},
	\end{align}
	i.e., the Hausdorff distance between union of sets of the type described in \eqref{proof:hdist_bound:tF_PP_k}, \eqref{proof:hdist_bound:tF_RP_k}, and with an inclusion relation such as \eqref{proof:hdist_bound:tFcal}, is bounded by the largest distance between corresponding pairs constituting the unions in \eqref{proof:hdist_bound:tFcal}.
	
		Now, to bound $\d(\Fcal_{\text{PP}_\veps}^k, \Fcal_{\text{RP}_\veps}^k)$, we observe that $\Fcal_{\text{RP}_\veps}^k$ in \eqref{proof:hdist_bound:tF_RP_k} is of the form  $A^kx\leq b^k+\gamma^k$, with $A^k$ obtained by stacking vertically the matrices $C_h, \forall h\in\Hcal_k$, $b^k$ obtained by stacking vertically $-D_h\hthetaj - b_h$, and $\gamma^k$ obtained by stacking vertically $\gamma^{(j)}_h$, $\forall h\in\Hcal_k, \forall j\in\Jcal_k$. Similarly, $\Fcal_{\text{PP}_\veps}^k$ is of the form  $A^kx\leq b^k - \tau^k$, for an appropriate $\tau^k$ derived with the same reasoning. Hence, we have
		\begin{align*}
			&\Fcal_{\text{RP}_\veps}^k = \{x\in\R^n: A^kx\leq b^k + \gamma^k\},
			\\& \Fcal_{\text{PP}_\veps}^k = \{x\in\R^n: A^kx\leq b^k - \tau^k\},
		\end{align*}
		i.e., $\Fcal_{\text{PP}_\veps}^k$ and $\Fcal_{\text{RP}_\veps}^k$, are polytopic sets that differ only in a tightening or relaxation parameter, $\forall k\in\{1,...,M\}$.
		Let $A^k_{[i]}$ be any invertible square submatrix of $A^k$ of dimension $n$, and let $\sigma^k_{\text{min}, i}$ be its smallest singular value, $\forall i\in\{1,...,n^k_\text{s}\}$, where $n^k_\text{s}$ is the number of the square invertible submatrices of $A^k$. Let $\sigma(A^k) = \min_{i\in\{1,...,n_\text{s}^k\}} \sigma^k_{\text{min}, i}$. 
		From \cite{conforti2014integer} (Theorem 3.34), $v_{\text{RP}, i}^{k}$ is a vertex of $\Fcal_{\text{RP}_\veps}^k$ if and only if the inequalities $A^k v_{\text{RP}, i}^{k} \leq b^k + \gamma^k$ hold with equality for $n$ linearly independent rows of $A^k$. A similar argument holds for $\Fcal_{\text{PP}_\veps}^k$. Hence, the vertices of $\Fcal_{\text{RP}_\veps}^k$ and $\Fcal_{\text{PP}_\veps}^k$ can be determined from
		\begin{align*}
			v_{\text{RP}, i}^{k} = (A_{[i]}^{k})^{-1} (b^k_{[i]} + \gamma^k), \quad  v_{\text{PP}, i}^{k} = (A_{[i]}^{k})^{-1} (b^k_{[i]} - \tau^k)
		\end{align*}
		for $i\in\{1,...,n^k_\text{s}\}$,
		which in particular tells us that the vertices of $\Fcal_{\text{PP}_\veps}^k$ are given by the intersection of the corresponding translated faces of $\Fcal_{\text{RP}_\veps}^k$.
		Now, it is easy to observe that
		\begin{align}\label{proof:vertex_1}
			\d(\Fcal_{\text{RP}_\veps}^k, \Fcal_{\text{PP}_\veps}^k) = \max_{i\in\{1,...,n^k_\text{s}\}} \|v_{\text{RP}, i}^{k} - v_{\text{PP}, i}^{k}\|_2,
		\end{align}
		since from \cite{wills2007hausdorff} we know that the Hausdorff distance between bounded convex sets is the Hausdorff distance between their boundaries, and since the largest distance between two parallel faces in $\Fcal_{\text{RP}_\veps}^k, \Fcal_{\text{PP}_\veps}^k$ is attained at the corresponding vertices. By substituting the expressions for $v_{\text{RP}, i}^{k}, v_{\text{PP}, i}^{k}$, we have
		\begin{align*}
			\|v_{\text{RP}, i}^{k} - v_{\text{PP}, i}^{k}\|_2 
			= \|(A_{[i]}^{k})^{-1} (\gamma^k - \tau^k) \|_2
			\leq \frac{\|\gamma^k - \tau^k\|_2}{\sigma(A^k)}  
		\end{align*}
		where we use the known fact that $\|(A_{[i]}^{k})^{-1}\|_2 = \frac{1}{\sigma(A^k)}$ (see \cite{demmel1997applied}, Theorem 3.3). Thus, from \eqref{proof:vertex_1}, we have
		\begin{align*}
			\d(\Fcal_{\text{RP}_\veps}^k, \Fcal_{\text{PP}_\veps}^k) \leq \max_{h\in\mathcal{H}_k}\max_{j\in\mathcal{J}_k} \frac{\|\gamma_{h}^{(j)} - \tau_{h}^{(j)}\|_2}{\sigma(C_h)},
		\end{align*}
		and from \eqref{proof:hdist_bound:dist_for_union} we finally obtain the claim.

	$\hfill\blacksquare$
	
	\subsection{Proof of Lemma \ref{lemma:optimality_hdist} }
	Let $\Pcal = \Biggl\{P_q: P_q = \sum_{j\in\Jcal_q} \hpj, \Jcal_q\subset\{1,...,K\}, q\in\{1,...,2^K\}\Biggr\}$
	i.e., $\Pcal$ contains all the terms that can be expressed as sum of elements from $\{\hat{p}^{(1)}, ..., \hat{p}^{(K)}\}$ (thus, $\Pcal$ contains at most $2^K$ distinct elements), which we assume to be ordered in increasing order. This also means that the left-hand side of the chance constraint in  $\Fcal_{\text{RP}_{\veps}}$ in \eqref{eq:feas_set_rp} is one of the elements of $\Pcal$ $\forall x\in\Fcal_{\text{RP}_{\veps}}$, which also implies that $\Fcal_{\text{RP}_{\veps}}$ does not change for $\veps\in[P_q, P_{q+1})$.
	
	Now, in view of \eqref{lemma:feas_set_jump:assum}, we have
	$P_q < \veps < P_{q+1},$
	for some $q\in\{1,...,|\Pcal|-1\}$, which means that there always exists a $\delta$, positive and sufficiently small, such that
	$P_q < \veps - \delta < P_{q+1}$ and $ P_q < \veps + \delta < P_{q+1},$
	hold true simultaneously, or equivalently:
	\begin{align*}
		&\delta < \min \{\veps - P_q, P_{q+1}-\veps\}
		\\&  \Leftrightarrow \delta \in\Biggl(0, \ \min_{\Jcal\subset\{1,...,K\}} \biggl|\veps - \sum_{j\in\Jcal} \hpj \biggr|\Biggr).
	\end{align*}
	Thus, we have proved that for any $\delta$ that satisfies \eqref{lemma:feas_set_jump:delta_interval}, it holds that
	$\Fcal_{\text{RP}_{\veps-\delta}} = \Fcal_{\text{RP}_{\veps+\delta}}.$
	Finally, from Lemma \ref{lemma:bound_hdist} and \ref{lemma:Hdist_polytopes}, we have
	\begin{align*}
		J^\star_{\text{PP}_{\veps-\delta}} - J^\star_{\text{RP}_{\veps+\delta}} 
		& \leq L_x \mathbb{d}(\Fcal_{\text{RP}_{\veps-\delta}}, \Fcal_{\text{PP}_{\veps-\delta}}) 
		\\& \leq L_x \max_{\substack{j\in\{1,...,K\} \\ h\in\{1,...,Z\}}} \frac{\|\tau^{(j)}_{h}-\gammaj_h\|_2}{\sigma(C_h)} ,
	\end{align*}
	and the claim follows from Theorem \ref{th:optimal_values} .

	\section*{Acknowledgment}
	We thank Peyman Mohajerin Esfahani and Remy Spliet for the fruitful discussions.

	\section*{References}
	\bibliographystyle{IEEEtran}
	
	\bibliography{bibliography.bib}

	\begin{IEEEbiography}[{\includegraphics[width=1in,height=1.25in,clip,keepaspectratio]{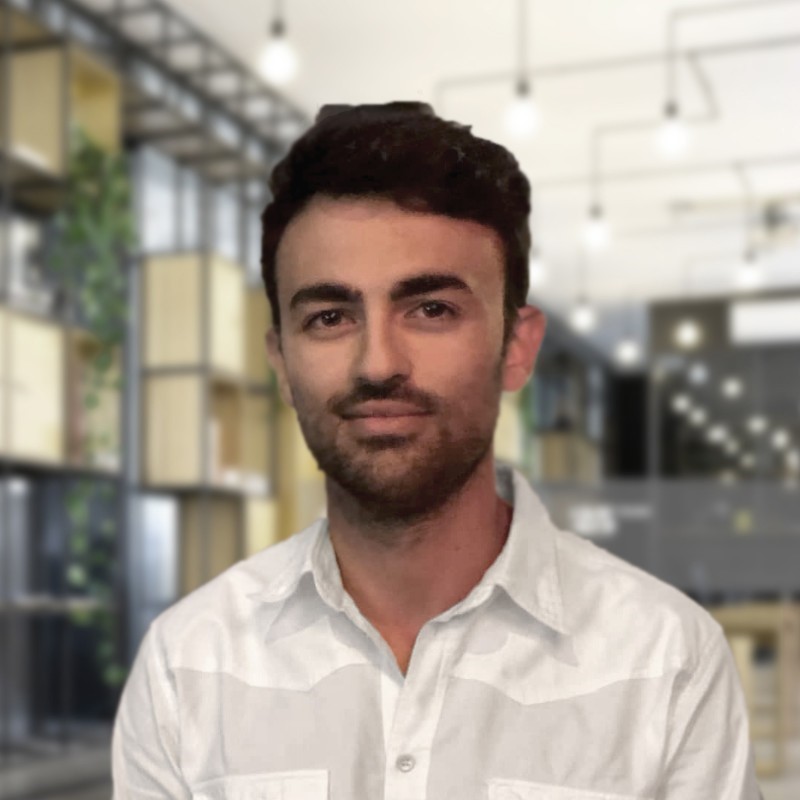}}]{Francesco Cordiano} received the B.Sc. degree from Politecnico di Milano, Italy, and the M.Sc. degree from ETH Zurich, Switzerland. He is currently a Ph.D. candidate at the Delft Center for Systems and Control, Delft University of Technology, The Netherlands.
		
		His current research interests include stochastic optimization, reinforcement learning, and model predictive control of hybrid systems.
		
	\end{IEEEbiography}
	
	\begin{IEEEbiography}[{\includegraphics[width=1in,height=1.25in,clip,keepaspectratio]{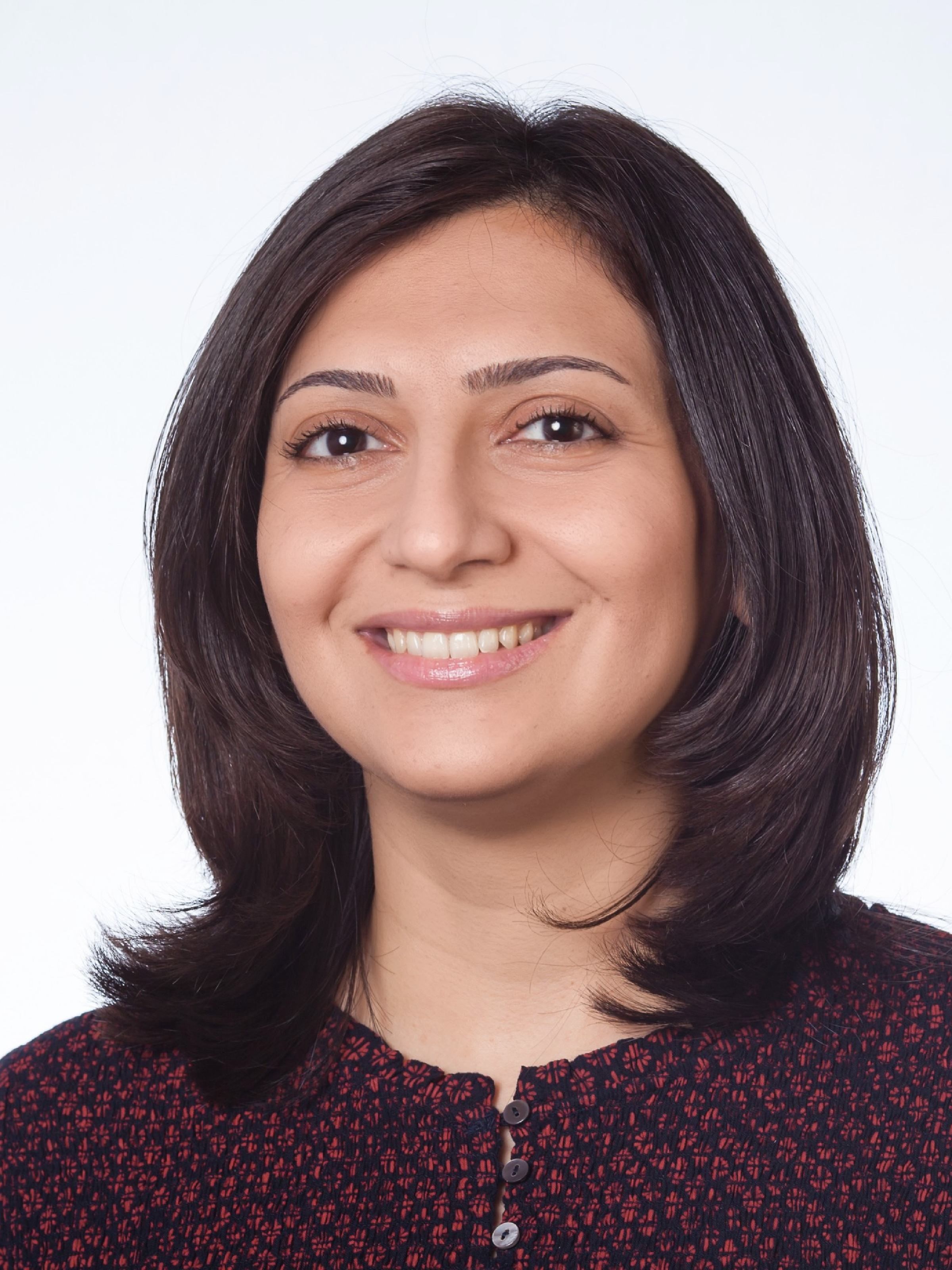}}]{Matin Jafarian} (Member, IEEE) is currently an Assistant Professor at Delft University of Technology (TU Delft), The Netherlands. Her theoretical research interests include modeling, analysis and control of nonlinear, and stochastic dynamic networks.
	\end{IEEEbiography}
	
	\begin{IEEEbiography}[{\includegraphics[width=1in,height=1.25in,clip,keepaspectratio]{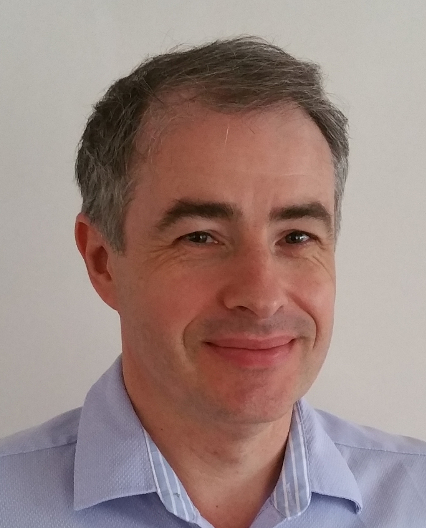}}]{Bart De Schutter} (IEEE member since 2008, senior member since 2010, fellow since 2019) is a full professor and head of department at the Delft Center for Systems and Control of Delft University of Technology in Delft, The Netherlands. 
		
	Bart De Schutter is senior editor of the IEEE Transactions on Intelligent Transportation Systems.  His current research interests include integrated learning- and optimization-based control, multi-level and multi-agent control, and control of hybrid systems.      
	\end{IEEEbiography}
	
\end{document}